\numberwithin{equation}{section}
\newtheorem{thm}{Theorem}
\numberwithin{thm}{section}
\newtheorem{prop}[thm]{Proposition}
\newtheorem{lem}[thm]{Lemma}
\theoremstyle{definition}
\theoremstyle{remark}
\newtheorem{rem}[thm]{Remark}
\providecommand{\BBb}[1]{{\mathbb{#1}}}
\providecommand{\cal}[1]{{\mathcal{#1}}}
\newcommand{\C}{{\BBb C}}
\newcommand{\Cb}{C_{\operatorname{b}}}
\newcommand{\dual}[2]{\langle\,#1,\,#2\,\rangle}
\newcommand{\fracc}[2]{{
                \textstyle\frac{#1}{\raise 1pt\hbox{$\scriptstyle #2$}}}}
\newcommand{\fracp}{\fracc1p}
\newcommand{\fracnp}{\fracc np}
\newcommand{\fracci}[2]{{\frac{#1}{\raise 1pt\hbox{$\scriptscriptstyle #2$}}}}
\newcommand{\fracpi}{\fracci1p}
\newcommand{\g}{\gamma_{\hspace{-0.17pt}0} }
\newcommand{\grad}{\operatorname{grad}}
\renewcommand{\Im}{\operatorname{Im}}
\newcommand{\mlap}{-\!\operatorname{\Delta}}
\newcommand{\norm}[2]{\mathinner{\|}#1\,|#2\|}
\newcommand{\Norm}[2]{\mathinner{\bigl\|\,#1\,\big|#2\bigr\|}}
\newcommand{\op}[1]{\operatorname{#1}}
\newcommand{\rt}{r_0}   
\renewcommand{\Re}{\operatorname{Re}}
\newcommand{\R}{{\BBb R}}
\newcommand{\Rn}{{\BBb R}^{n}}
\newcommand{\Rp}{\overline{{\BBb R}}_+}
\newcommand{\supp}{\operatorname{supp}}
\newcommand{\Z}{\BBb Z}
\title[Traces of Besov spaces]{Traces of Besov spaces revisited}
\author{Jon Johnsen}
\address{Department of Mathematical Sciences, 
Aalborg University, Fredrik Bajers Vej 7E, DK-9220 Aalborg O; Denmark}
\email{jjohnsen@math.auc.dk}
\subjclass{46E35}
\keywords{Distributional trace operator, borderline cases, mixed-norm estimate, convergence criteria, elliptic boundary
problems}
\thanks{~\\[2\baselineskip]%
{\tt Appeared in Journal of analysis and its applications (Zeischrift f{\"u}r Analysis und ihre Anwendungen),
vol.~19 (2000), no.~3, 763-779}}
\begin{document}
\begin{abstract}
For the trace of Besov spaces $B^s_{p,q}$ onto a hyperplane, the
borderline case with $s=\fracnp-(n-1)$ and $0<p<1$ is analysed and a new
dependence on the sum-exponent $q$ is found. Through examples
the restriction operator defined for $s$ down to $1/p$, and valued in
$L_p$, is shown to be distinctly different and, moreover, unsuitable for
elliptic boundary problems. 
All boundedness properties (both new and previously known) are
found to be easy consequences of a simple mixed-norm estimate, which also yields
continuity with respect to the normal coordinate. The surjectivity for
the classical borderline $s=\fracp$ ($1\le p<\infty$) is given a simpler proof
for all $q\in\,]0,1]$, using only basic functional analysis.
The new borderline results are based on corresponding convergence
criteria for series with spectral conditions.
\end{abstract}
\maketitle
\section{Introduction} \label{intro-sect}
\enlargethispage{2\baselineskip}\thispagestyle{empty}

This note concerns the (distributional) trace operator $\g$ that restricts
to the hyperplane $\Gamma:=\{x_n=0\}$ in $\Rn$ for $n\ge 2$,
\begin{equation}
  \label{bsc-op}
  \gamma_0\colon f(x_1,\dots,x_n)\mapsto f(x_1,\dots,x_{n-1},0).
\end{equation}
The title should indicate both that there remains unexplored
borderlines in the $L_p$-theory of $\g$ and that the existing litterature
do not reveal the full efficacy of the Fourier analytic proof methods.

\bigskip

The main purpose is to describe the borderline cases for $0<p<1$.
See the below Theorem~\ref{nbord-thm} concerning
$s=\fracnp-n+1$, where it is shown that the smallest Besov space containing
$\g(B^{s}_{p,q})$ has its integral-exponent equal to $\max(p,q)$, hence
depending on both the integral- and the sum-exponent of the domain. This
result seems to be hitherto undescribed.

Secondly Theorem~\ref{nbord-thm} is proved in a mere
two lines, deriving from  the
Paley--Wiener--Schwartz theorem and the
Nikolski\u\i--Plancherel--Polya inequality  a basic mixed-norm, in fact
$L_p(\R^{n-1}; L_{\infty,x_n})$,   estimate. In addition all the known
boundedness results are recovered equally easily from the same
calculation. The ensuing \emph{unified} treatment is
in contrast with  the existing litterature, which has various page-long
arguments both for the generic cases ($s>\fracp+(n-1)(\fracp-1)_+$) and the
classical borderline $s=\fracp$ ($1\le p\le\infty$).
The present paper should also be interesting for this reason.

Thirdly, another perspective on $\g$ is also gained from
the mixed-norm estimate,  for this yields (since the value $x_n=0$ har no
special significance)
that all the treated $B^{s}_{p,q}$ are contained in  $C(\R, \cal
D'(\R^{n-1}))$ and that $\g$ is a restriction of the natural trace on the
latter space. This property has not been given much
attention in the Besov space litterature (J.~Peetre's report \cite{Pee75}
seems to be the only example), although in practice $\g$ has been defined
space by space by means of a limiting procedure. Evidently this raises the question
whether $\g u$ is consistently defined when $u$ belongs to both
$C(\Rn)$ and $B^1_{1,1}(\Rn)$ or to another intersection of two
spaces. However, the consistency is always assured by the below embedding
into $C(\R,\cal D'(\R^{n-1}))$. 

Finally, the surjectiveness of 
$\g\colon B^{\fracpi}_{p,q}(\Rn) \to L_p(\R^{n-1})$ for
$1\le p<\infty$ and $0<q\le 1$ is given a new proof by an easy extension of
the Closed Range Theorem to quasi-Banach spaces.

\bigskip

For precision's sake it should be mentioned that $\g$ first of all 
refers to a \emph{working definition} of the trace
as $\g u= \sum (\check\Phi_k*u)|_{x_n=0}$, whereby $u=\sum \cal
F^{-1}(\Phi_k\hat u)$ is a Littlewood--Paley decomposition; cf.\ 
Section~\ref{definition-sect} below. Consistency and independence of the
$\Phi_k$ are obtained post-festum, cf.~\eqref{i3} and
Theorem~\ref{dist-thm} below. 
As the point of departure, the generic properties of $\g$ are recalled: 

\begin{thm}[\cite{T0},\cite{J}]
  \label{gnrc-thm}
When applied to the Besov spaces $B^s_{p,q}(\Rn)$ with $0<p,q\le\infty$, the trace
$\g$ is continuous 
\begin{equation}
  \label{i2}
  \gamma_0\colon B^s_{p,q}(\Rn)\to B^{s-\fracpi}_{p,q}(\R^{n-1})
\end{equation}
for $s>\fracp$ if $p\ge1$, and for $s>\fracnp-n+1$ if $p<1$. 
Moreover, $\g$ has a right inverse $K$,  which is bounded from
$B^{s-\fracpi}_{p,q}(\R^{n-1})$ to 
$B^{s}_{p,q}(\Rn)$ for every $s\in\R$.
\end{thm}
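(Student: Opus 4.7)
My plan is to reduce the whole theorem to one mixed-norm estimate on Littlewood--Paley pieces and then perform brief bookkeeping. Decompose $u=\sum_k u_k$ with $u_k=\cal F^{-1}(\Phi_k\hat u)$. By the Paley--Wiener--Schwartz theorem each $u_k$ is an entire function of exponential type, and for every fixed $x'\in\R^{n-1}$ the one-variable function $x_n\mapsto u_k(x',x_n)$ has spectrum contained in a compact interval of length $\sim 2^k$. Applying the one-dimensional Nikolski\u\i--Plancherel--Polya inequality (valid for all $p\in(0,\infty]$) pointwise in $x'$ and integrating the $p$-th power, I obtain the basic mixed-norm inequality
\begin{equation*}
\|u_k\|_{L_p(\R^{n-1};\,L_\infty(\R))}\le C\,2^{k/p}\,\|u_k\|_{L_p(\Rn)}.
\end{equation*}
Since $|\g u_k(x')|\le\|u_k(x',\cdot)\|_{L_\infty(\R)}$ pointwise, this yields $\|\g u_k\|_{L_p(\R^{n-1})}\le C\,2^{k/p}\,\|u_k\|_{L_p(\Rn)}$ for every $p>0$.

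I then assemble the Besov norm of $\g u:=\sum_k\g u_k$. Since $\g u_k$ has $(n-1)$-dimensional spectrum contained in $\{|\xi'|\le c\,2^k\}$, only the indices $k\ge j-C$ contribute to the $j$-th $(n-1)$-dimensional Littlewood--Paley block $\psi_j*\g u$. For $p\ge1$ the triangle inequality combined with the uniform $L_p$-boundedness of convolution with $\psi_j$ on band-limited functions gives $\|\psi_j*\g u\|_{L_p}\le C\sum_{k\ge j-C}2^{k/p}\|u_k\|_{L_p}$; for $0<p<1$ the $p$-subadditivity of $\|\cdot\|_{L_p}^p$ instead yields $\|\psi_j*\g u\|_{L_p}^p\le C\sum_{k\ge j-C}2^k\|u_k\|_{L_p}^p$. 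After weighting by $2^{j(s-1/p)}$, each estimate becomes a one-sided discrete convolution on $\ell^q$, respectively $\ell^{q/p}$; discrete Young's inequality (augmented by subadditivity of $x\mapsto x^\alpha$ for $\alpha\le1$ when $q<p$) closes the argument as soon as the decay rate $s-1/p$ is strictly positive. For $p<1$ the stricter classical threshold $s>n/p-n+1$ stated in the theorem is obtained by combining the Besov embedding $B^s_{p,q}(\Rn)\hookrightarrow B^{s-n(1/p-1)}_{1,q}(\Rn)$ with the $p=1$ case just established.

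For the right inverse I fix $\chi\in\cal S(\R)$ with $\chi(0)=1$ and $\hat\chi$ compactly supported, and set $\chi_k(x_n):=\chi(2^k x_n)$. Given $v=\sum_k v_k\in B^{s-1/p}_{p,q}(\R^{n-1})$, put $Kv:=\sum_k v_k\otimes\chi_k$; then $\g Kv=\sum_k v_k\,\chi(0)=v$ termwise. Since each tensor $v_k\otimes\chi_k$ has spectrum in a box of side $\sim 2^k$, only those $k$'s with $|k-j|\le C$ contribute to any $n$-dimensional block $\Phi_j*Kv$. Using the scaling identity $\|\chi_k\|_{L_p(\R)}=2^{-k/p}\|\chi\|_{L_p(\R)}$, which exactly absorbs the regularity shift by $1/p$, the Besov norm of $Kv$ telescopes onto that of $v$ with no restriction on $s$, yielding $\|Kv\|_{B^s_{p,q}(\Rn)}\le C\|v\|_{B^{s-1/p}_{p,q}(\R^{n-1})}$ for every $s\in\R$ and $p,q\in(0,\infty]$.

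The principal obstacle is keeping the bookkeeping coherent below $p=1$: the $p$-subadditivity forces the outer summation into $\ell^{q/p}$, which must be paired with band-limited $L_p$-multiplier bounds whose usual Mikhlin-type derivations are unavailable. Once the pointwise-in-$x'$ Nikolski\u\i--Plancherel--Polya inequality is in place, however, every remaining step is a routine Young-type estimate applied uniformly in $p$.
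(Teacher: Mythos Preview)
Your mixed-norm estimate and the construction of $K$ are correct and coincide with the paper's argument (the paper merely packages the summation step as an appeal to its Theorem~\ref{Y-thm}(ii),(iv) instead of doing the Young bookkeeping by hand). The gap is in your treatment of $\gamma_0$ for $0<p<1$. The bound $\|\psi_j*\gamma_0 u\|_{L_p}^p\le C\sum_{k\ge j-C}2^{k}\|u_k\|_{L_p}^p$ tacitly uses $\|\psi_j*(\gamma_0 u_k)\|_{L_p}\le C\|\gamma_0 u_k\|_{L_p}$ with $C$ independent of $j,k$, and for $p<1$ this is \emph{false} when $k-j$ is large: $\gamma_0 u_k$ is band-limited only to $|\xi'|\lesssim 2^k$, and the correct $L_p$-multiplier estimate carries an extra factor $2^{(k-j)(n-1)(1/p-1)}$ (test with $\gamma_0 u_k=2^{k(n-1)}g(2^k\cdot)$ against $j=0$). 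Carrying that factor through your discrete convolution shifts the required decay from $s-\tfrac1p>0$ to $s-\tfrac1p>(n-1)(\tfrac1p-1)$, i.e.\ precisely $s>\tfrac np-n+1$; this is exactly what the paper's Theorem~\ref{Y-thm}(ii) encodes. Your direct argument therefore cannot give the range $s>\tfrac1p$ for $p<1$, and indeed the paper shows that on $B^{s}_{p,q}$ with $\tfrac1p<s<\tfrac np-n+1$ the series defining $\gamma_0 u$ need not even converge in $\cal D'$.

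Your fallback via the embedding $B^{s}_{p,q}(\R^n)\hookrightarrow B^{s-n(1/p-1)}_{1,q}(\R^n)$ does recover the threshold $s>\tfrac np-n+1$, but it lands only in $B^{s-n(1/p-1)-1}_{1,q}(\R^{n-1})$. This has the same differential dimension as the required codomain $B^{s-1/p}_{p,q}(\R^{n-1})$ but integral exponent $1$ rather than $p<1$, hence is strictly larger, with no embedding back. So neither route proves the full statement for $p<1$; the fix is simply to insert the missing factor $2^{(k-j)(n-1)(1/p-1)}$ into your block estimate and rerun the Young argument, which then gives the codomain $B^{s-1/p}_{p,q}(\R^{n-1})$ directly under the correct hypothesis $s>\tfrac np-n+1$.
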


It is known, but proved explicitly here, that on the one hand
$\gamma_0$ in 
\eqref{i2} is a restriction of the distributional trace, that is of
\begin{equation}
  \label{i3}
  f(0)\quad \text{defined for} \quad
  f(t)\in C(\R,\cal D'(\R^{n-1})).
\end{equation}
(This is also denoted by $\g f$ in the rest of the
introduction.) 
On the other hand,
the restriction of $\gamma_0$ to the Schwartz space $\cal S(\Rn)$
extends by continuity, cf.\ \cite{Jaw78, FJ1,FJ2, T3}, to an operator  
\begin{equation}
  \label{i4}
  T\colon B^s_{p,q}(\Rn)\to L_p(\R^{n-1}),
  \quad\text{for}\quad s>\fracp, \quad 0<p<1.
\end{equation}
It should be emphasised that $T$ is rather different
from $\gamma_0$ when $s<\fracnp-n+1=\fracp+(n-1)(\fracp-1)$ (whereby $\g$
acts only on the intersection of $B^{s}_{p,q}$ and $C(\R,\cal
D'(\R^{n-1}))$, cf.~\eqref{i3}).
Their incompatibility  may be exemplified by tensorising some
$\varphi\in C^\infty_0(\R)$ equal to $1$ near $x_n=0$ with the delta
measure $\delta_0$ in $\R^{n-1}$, for 
\begin{align}
  \gamma_0(\delta_0(x')\otimes \varphi(x_n))&= \delta_0(x'),
 \label{i6} \\
  \text{whereas}\quad T(\delta_0(x')\otimes \varphi(x_n))&=0.\phantom{whereas} 
 \label{i5} 
\end{align}
Here \eqref{i6} is clear by \eqref{i3}, since $a\delta_0$ depends
continuously on the scalar $a$.

The result in \eqref{i5} is connected to the fact that the co-domain 
$L_p$ is not
continuously embedded into $\cal D'$ when $p<1$; this fact is
elementary, for when $\eta\in\cal S(\Rn)$ with $\int \eta=1$, 
then $k^n\eta(k\cdot)$ tends to $\delta_0$ in $\cal D'$ and to $0$ in $L_p$
for $k\to\infty$ because
\begin{equation}
  \norm{k^n\eta(k\cdot)}{L_p}=\norm{\eta}{L_p}k^{n(1-\fracpi)}\to 0,
  \quad\text{for each $p<1$}.
  \label{i7}
\end{equation}
With a similar $\eta\in\cal S(\R^{n-1})$ and
$\psi_k(x)=k^{n-1}\eta(kx')\varphi(x_n)$,  
\begin{align}
  \gamma_0\psi_k&= k^{n-1}\eta(kx')\to \delta_0
  \quad\text{in $\cal D'$},
  \label{i9} \\
  \text{whereas}\quad T\psi_k&= k^{n-1}\eta(kx')\to 0
  \quad\text{in $L_p$}, 
  \label{i8} 
\end{align}
so the sequence $(\psi_k)$ is treated rather differently by $\g$ and
$T$ (in fact \eqref{i5} can be proved thus, cf.\ Remark~\ref{cx1-rem}
below). 

These phenomena also depend on the \emph{domain} chosen in
\eqref{i4}. Indeed, $\g$ in \eqref{i3} is for $p<1$ continuous  
$B^{\fracci np-n+1}_{p,q}\to\cal D'$  only if $q\le 1$ (and
a fortiori not at all for $s<\fracnp-n+1$)
by \cite[Lem.~2.8]{JJ96ell}, or \cite[Lem.~2.5.2]{JJ93}; however,
the counterexample there does not contradict \eqref{i4}, cf.\
Remark~\ref{cx2-rem}. (Similarly, for $s=\fracp$ and $q>1$, hence for
$s<\fracp$, it was shown too that $\g$ is never
continuous from $B^{s}_{p,q}$, \emph{regardless} of the co-domain.)

Moreover, the severe shortcomings of $T$ in connection with elliptic boundary
problems for $s\le\fracnp-n+1$ are reviewed in Remark~\ref{ell-rem} below. 

Altogether $T$ discards so much information that it is inconsistent
with the distribution trace $\gamma_0$, seemingly to the extent that
it is inappropriate, for the usual applications, to maintain $s=\fracp$ as
the borderline when $p<1$.

\bigskip

In view of the above, it is natural to analyse $s=\fracnp-n+1$ when $p<1$.
The main point is that $q\le p \le1$ and $p<q\le1$ constitute two rather
different cases:

\begin{thm}
  \label{nbord-thm}
For $0<p<1$ the operator $\gamma_0$ is continuous 
\begin{equation}
  \gamma_0\colon B^{\fracci np-n+1}_{p,q}(\Rn)\to
B^{(n-1)(\fracpi-1)}_{p,\infty}(\R^{n-1}) \quad\text{if}\quad q\le p<1,
  \label{i10}
\end{equation}
whereas it is bounded 
\begin{equation}
  \gamma_0\colon B^{\fracci np-n+1}_{p,q}(\Rn)\to
B^{(n-1)(\fracci 1q-1)}_{q,\infty}(\R^{n-1}) \quad\text{when}\quad p<q\le1.
  \label{i11}
\end{equation}
Furthermore, $q$ is the \emph{smallest} possible
integral-exponent
for the co-domain in \eqref{i11}, for even
$B^{t}_{r,\infty}$ can only receive when $r\ge q$.
\end{thm}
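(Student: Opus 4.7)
The plan is to derive all three claims from the fundamental mixed-norm inequality announced in the introduction combined with the $p$- or $q$-subadditivity of the quasi-Banach Besov quasi-norms. Writing $u=\sum_{k\ge0}u_k$ for the Littlewood--Paley decomposition of $u\in B^{n/p-n+1}_{p,q}(\Rn)$ and setting $a_k:=2^{k(n/p-n+1)}\|u_k\|_{L_p(\Rn)}$, so that $\|u\|_{B^{n/p-n+1}_{p,q}}\sim\|(a_k)\|_{\ell^q}$, the Paley--Wiener--Schwartz theorem applied in $x_n$ and the one-dimensional Nikolski\u\i--Plancherel--Polya inequality give, fibrewise, $\|u_k(x',\cdot)\|_{L_\infty(\R_{x_n})}^p\le C\,2^k\|u_k(x',\cdot)\|_{L_p}^p$; integrating in $x'$ yields the crucial estimate
\[
  \|\gamma_0 u_k\|_{L_p(\R^{n-1})} \le C\,2^{k/p}\|u_k\|_{L_p(\Rn)} = C\,2^{-k(n-1)(1/p-1)}\,a_k.
\]

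For \eqref{i10} with $q\le p<1$, set $s':=(n-1)(1/p-1)>0$; since $v_k:=\gamma_0 u_k$ has $\R^{n-1}$-Fourier support in a ball of radius $\sim 2^k$, the borderline Bernstein-type estimate gives $\|v_k\|_{B^{s'}_{p,\infty}(\R^{n-1})}\le C\,2^{ks'}\|v_k\|_{L_p}\le C'a_k$. The quasi-norm on $B^{s'}_{p,\infty}$ is $p$-subadditive (since $\min(p,\infty,1)=p$); combining this with the embedding $\ell^q\hookrightarrow\ell^p$, which is precisely where the hypothesis $q\le p$ is imposed, yields
\[
  \|\gamma_0 u\|_{B^{s'}_{p,\infty}}^p \le \sum_k\|v_k\|_{B^{s'}_{p,\infty}}^p \le C\sum_k a_k^p \le C\,\|(a_k)\|_{\ell^q}^p.
\]
For \eqref{i11} with $p<q\le 1$, additionally apply the Nikolski\u\i--Plancherel--Polya inequality in $\R^{n-1}$ to pass from $L_p$ to $L_q$, obtaining $\|v_k\|_{L_q}\le C\,2^{-k(n-1)(1/q-1)}a_k$; with $s'':=(n-1)(1/q-1)$ the analogous Bernstein estimate and the $q$-subadditivity of $B^{s''}_{q,\infty}$ (since $\min(q,\infty,1)=q$) then give $\|\gamma_0 u\|_{B^{s''}_{q,\infty}}^q\le C\sum_k a_k^q$, which is \eqref{i11}.

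For the sharpness of $q$, fix $r<q$ and a sequence $(c_k)\in\ell^q\setminus\bigcup_{r'<q}\ell^{r'}$, for instance $c_k=k^{-1/q}(\log k)^{-2/q}$. Pick $f\in\cal S(\Rn)$ with $\hat f\ge0$ supported in an annulus $\{|\xi|\sim 1\}$ and with $\int\hat f(0,\xi_n)\,d\xi_n\ne 0$, so that $\bar f(y'):=f(y',0)$ has $\int \bar f\ne 0$, and pick well-separated points $x_k\in\R^{n-1}$ with $|x_k-x_j|\ge 1$ for $k\ne j$. Set
\[
  u(x):=\sum_{k\ge 1} c_k\,2^{k(n-1)}\,f\bigl(2^k(x'-x_k),\,2^k x_n\bigr).
\]
Each summand has Fourier support in the $\Rn$-annulus $\{|\xi|\sim 2^k\}$, hence belongs to the $k$-th dyadic block, and a scaling computation yields $\|u\|_{B^{n/p-n+1}_{p,q}}\sim\|(c_k)\|_{\ell^q}<\infty$. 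On the other hand $\bar f$ has $\R^{n-1}$-ball spectrum with non-zero average, so for $k\ge j$ one has the approximation $\psi_j*[\bar f(2^k(\cdot-x_k))]\approx 2^{-k(n-1)}(\int\bar f)\psi_j(\cdot-x_k)$; the well-separation of the $x_k$ makes the $L_r$-quasi-norms essentially additive, producing a lower bound $\|\psi_j*\gamma_0 u\|_{L_r(\R^{n-1})}^r\ge c_0\,\|\psi_j\|_{L_r}^r \sum_{k\ge j}|c_k|^r=\infty$, since $(c_k)\notin\ell^r$. Therefore $\gamma_0 u$ lies outside every $B^t_{r,\infty}(\R^{n-1})$, excluding $r<q$. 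The main technical point is the borderline Bernstein inequality for $p<1$, whose uniformity in $k$ requires a Peetre-type maximal-function argument in the quasi-Banach range; this is precisely where the ``convergence criteria for series with spectral conditions'' mentioned in the abstract do the work.
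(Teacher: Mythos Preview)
Your argument for the boundedness statements \eqref{i10} and \eqref{i11} is correct and coincides with the paper's: both rest on the single mixed-norm inequality
\[
  \|\cal F^{-1}(\Phi_k\hat u)(\cdot,0)\|_{L_p(\R^{n-1})}\le c\,2^{k/p}\,\|\cal F^{-1}(\Phi_k\hat u)\|_{L_p(\Rn)},
\]
after which one passes to the target Besov space via the band-limited convolution estimate for $p<1$. The paper packages that last step as part~(iii) of Theorem~\ref{Y-thm} (the ``convergence criteria''), while you unfold it inline as a borderline Bernstein inequality followed by $p$- (resp.\ $q$-) subadditivity of the $B^{s'}_{p,\infty}$ quasi-norm and the embedding $\ell^q\hookrightarrow\ell^p$. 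These are the same computation in a different order.

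For the sharpness claim your approach is genuinely different from the paper's, and as written it has a gap. The paper does \emph{not} build a single element of $B^{\fracci np-n+1}_{p,q}$ whose trace falls outside $B^t_{r,\infty}$; instead it tests the putative estimate $\|\g u\|_{B^t_{r,\infty}}\le c\|u\|_{B^{\fracci np-n+1}_{p,q}}$ on the \emph{finite} sums
\[
  E\omega_N(x)=\sum_{k=1}^N \check\eta(2^kx_n)\,\check\Psi_k(x'-x'_k),
\]
whose right-hand norm is $\le cN^{1/q}$ while $\|\omega_N\|_{B^t_{r,\infty}}\ge cN^{1/r}$ by the explicit lower bound of Proposition~\ref{qp-prop}. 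Crucially, in that lower bound the separation of the points $x'_k$ is chosen \emph{depending on $N$} (one takes $x'_j=3jR(1,0,\dots,0)$ with $R$ so large that $|\check\Phi_0(x)|<\check\Phi_0(0)/(2N)$ for $|x|>R$), which kills the Schwartz tails relative to the $N$ terms present.

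In your infinite-sum construction the condition $|x_k-x_j|\ge1$ is not enough to make the $L_r$-quasi-norms ``essentially additive''. Near $x_m$ the tail $\sum_{k\ne m}c_k\check\Phi_j'(\cdot-x_k)$ is bounded only by a constant times $\sum_k c_k$, which does \emph{not} tend to zero, whereas the main term $c_m\check\Phi_j'(\cdot-x_m)$ does (since $c_m\to0$); for all large $m$ the tail swamps the main term and your lower bound $\|\psi_j*\g u\|_{L_r}^r\ge c_0\sum_{k\ge j}|c_k|^r$ does not follow. The idea is salvageable if you let the separation grow fast enough (e.g.\ choose the $x_k$ inductively so that $\sum_{k\ne m}c_k|\check\Phi_j'(x_m-x_k)|\le \tfrac12 c_m\check\Phi_j'(0)$ for every $m$), but as stated the step fails. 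The paper's finite-$N$ device sidesteps this issue entirely and is the cleaner route.
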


This shows that the smallest Besov space one may use as
a co-domain of $\g$ is
$B^{(n-1)(\fracci1r-1)}_{r,\infty}$ with $r=\max(p,q)$
when $s=\fracnp-n+1$ and $0<p<1$; in addition neither \eqref{i10} nor
\eqref{i11} is a surjection (hence the range is not a Besov space, 
cf.\ Remark~\ref{FJS-rem} below). Altogether this makes a noteworthy
contrast with Theorem~\ref{gnrc-thm}.

To  elucidate Theorem~\ref{nbord-thm}, one can observe that the
above-mentioned operator 
$T$ is a continuous surjection, see \cite[Th.~5.1]{FJ1}, \cite[4.4.3]{T3},
\begin{equation}
  T\colon B^{\fracpi}_{p,q}(\Rn)\to L_p(\R^{n-1})
  \quad\text{for}\quad 0<q\le p<1.
\end{equation}
Here the condition $q\le p$ is known to be necessary, and formally a
distinction between the same cases appear  in Theorem~\ref{nbord-thm}
too. This seems surprising and unnoticed hitherto, and a 
fortiori the theorem is a novelty; cf.~Remark~\ref{FJS-rem} below.

As an interpretation of \eqref{i11}, note that it follows from \eqref{i10}
when combined with a Sobolev embedding. In fact, given \eqref{i10} then
\begin{equation}
  B^{\fracci np-n+1}_{p,q}(\Rn)\hookrightarrow
  B^{\fracci nq-n+1}_{q,q}(\Rn)\xrightarrow{\;\g\;}
  B^{(n-1)(\fracci1q-1)}_{q,\infty}(\R^{n-1}),
  \label{3.5}
\end{equation}
and since $q$ is the optimal integral-exponent on the right hand
side of \eqref{i11}, cf.\ Section~\ref{nbord-sect} below, this
is the \emph{only} way to apply $\g$ when $p<q\le1$.

Moreover, in both \eqref{i10} and \eqref{i11} one can take
$L_1(\R^{n-1})$ as the receiving space, for by a Sobolev embedding
into $B^1_{1,1}(\Rn)$ the question is reduced to a case (viz.~$p=1$) of the following

\begin{thm}
  \label{obord-thm}
Let $1\le p\le\infty$ and $0<q\le1$. Then $\g$ in \eqref{i3} is
bounded
\begin{equation}
  \g\colon B^{\fracpi}_{p,q}(\Rn)\to L_p(\R^{n-1}).
  \label{2.7}
\end{equation}
Moreover, \eqref{2.7} is a {\bf surjection\/} if $1\le p<\infty$ and $0<q\le 1$.
\end{thm}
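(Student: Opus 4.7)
My plan is to derive the boundedness \eqref{2.7} directly from the mixed-norm estimate highlighted in the introduction, and then to obtain the surjection for $1\le p<\infty$ from the right inverse $K$ of Theorem~\ref{gnrc-thm} via a quasi-Banach extension of the Closed Range Theorem.

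For the boundedness, I would take a Littlewood--Paley decomposition $u=\sum_k\check\Phi_k*u$ of $u\in B^{\fracpi}_{p,q}(\Rn)$. Each $\check\Phi_k*u$ has Fourier support in a shell of radius $\sim 2^k$, so its partial Fourier transform in $\xi_n$ is supported in an interval of length $\lesssim 2^k$. Applying the Nikolski\u\i--Plancherel--Polya inequality in the $x_n$-variable (with $x'$ frozen), then taking $p$-th powers and integrating in $x'\in\R^{n-1}$, gives the mixed-norm estimate
\begin{equation*}
\|\check\Phi_k*u\|_{L_p(\R^{n-1};L_\infty(\R_{x_n}))}\le C\,2^{k/p}\|\check\Phi_k*u\|_{L_p(\Rn)}.
\end{equation*}
Bounding $|\g(\check\Phi_k*u)(x')|$ by the $L_\infty$-in-$x_n$ norm and using the inclusion $\ell^q\hookrightarrow\ell^1$ (valid for $0<q\le 1$), summation and the quasi-triangle inequality then give
\begin{equation*}
\|\g u\|_{L_p}\le\sum_k\|\g(\check\Phi_k*u)\|_{L_p}\le C\Bigl(\sum_k 2^{kq/p}\|\check\Phi_k*u\|_{L_p}^q\Bigr)^{1/q}=C\|u\|_{B^{\fracpi}_{p,q}},
\end{equation*}
which is \eqref{2.7}. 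The same mixed-norm estimate simultaneously places $u$ in $C(\R_{x_n},\cal D'(\R^{n-1}))$, supporting consistency with \eqref{i3}.

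For the surjection I would invoke Theorem~\ref{gnrc-thm} at $s=\fracpi$ to obtain a right inverse $K\colon B^0_{p,q}(\R^{n-1})\to B^{\fracpi}_{p,q}(\Rn)$ with $\g K=\operatorname{id}$, so that the range of $\g$ contains $B^0_{p,q}$, which is dense in $L_p$ for $1\le p<\infty$ (since $\cal S\subset B^0_{p,q}$ and $\cal S$ is dense in $L_p$). To upgrade density to equality I would apply a Closed Range Theorem adapted to the quasi-Banach source: given $f\in L_p$, build $u\in B^{\fracpi}_{p,q}$ with $\g u=f$ as a series $u=\sum_n K\tilde f_n$, where $\tilde f_n\in B^0_{p,q}$ approximates the residual $f_n:=f-\sum_{m<n}\tilde f_m$ with a contraction factor $\varepsilon<1$ in $L_p$ while obeying the uniform lifting bound $\|K\tilde f_n\|_{B^{\fracpi}_{p,q}}\le C\|f_n\|_{L_p}$. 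Convergence of the series in $B^{\fracpi}_{p,q}$ then follows from the $r$-subadditivity of the quasinorm (with $r=\min(q,1)$), since the $n$-th term is controlled by $C\varepsilon^n\|f\|_{L_p}$; and $\g u=f$ follows by telescoping in $L_p$.

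The main obstacle is producing, at each stage, the uniform lifting bound $\|K\tilde f_n\|_{B^{\fracpi}_{p,q}}\le C\|f_n\|_{L_p}$: the $B^0_{p,q}$-norm of a generic $L_p$-approximant exceeds $\|f_n\|_{L_p}$ by a factor that grows with the number of Littlewood--Paley shells it spans, so $\tilde f_n$ must be chosen with frequency support tailored to $K$, or $K$ itself replaced by a bespoke extension operator at each step. Assembling this is precisely the easy extension of the Closed Range Theorem to quasi-Banach spaces announced in the introduction.
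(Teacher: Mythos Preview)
Your boundedness argument is correct and coincides with the paper's: the mixed-norm estimate \eqref{2.4}--\eqref{2.5} from Nikolski\u\i--Plancherel--Polya in $x_n$, followed by $\ell_q\hookrightarrow\ell_1$ and part~(i) of Theorem~\ref{Y-thm}.

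For the surjection, however, your route diverges from the paper's and contains a genuine gap. The paper does \emph{not} use the right inverse $K$ or an iterative lifting scheme. Instead it computes the adjoint explicitly as $\g^{*}v=v\otimes\delta_0\colon L_{p'}(\R^{n-1})\to B^{\fracci1{p'}-1}_{p',\infty}(\Rn)$ and shows that $\g^{*}$ is bounded below, i.e.\ $\norm{v}{L_{p'}}\le c\norm{v\otimes\delta_0}{B^{\fracci1{p'}-1}_{p',\infty}}$, via the tensor-product Littlewood--Paley partition of Lemma~\ref{Liz-lem} together with weak-$*$ compactness of $L_{p'}$-balls. For $q<1$ the dual $(B^{\fracpi}_{p,q})^*$ is still $B^{-\fracpi}_{p',\infty}$, so the same adjoint estimate applies; the only new ingredient is Proposition~\ref{cr-thm}, which extends the implication ``$T^*$ bounded below $\Rightarrow$ $T$ open'' to quasi-Banach domains.

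Your plan instead tries to produce the preimage directly by iterating $K$. The obstacle you yourself flag---obtaining $\norm{K\tilde f_n}{B^{\fracpi}_{p,q}}\le C\norm{f_n}{L_p}$ with $C$ independent of the stage---is not resolved by appealing to ``the easy extension of the Closed Range Theorem'': that extension (the paper's Proposition~\ref{cr-thm}) takes the adjoint bound \eqref{*bd-eq} as its \emph{hypothesis}, and its proof is precisely the iterative absorption you sketch. So invoking it here is circular. Concretely, $K$ is bounded only from $B^0_{p,q}(\R^{n-1})$, and an $L_p$-approximant $\tilde f_n$ to $f_n$ generically has $\norm{\tilde f_n}{B^0_{p,q}}$ growing with the number of frequency shells it occupies (e.g.\ $\check\Psi_N'*f_n$ has $B^0_{p,q}$-norm of order $N^{1/q}\norm{f_n}{L_p}$), so no uniform $C$ is available from $K$ alone. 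What is missing from your argument is exactly the a~priori input that the paper supplies through the adjoint: either show $\g^{*}$ is bounded below, or equivalently show that the closure of $\g(B_{B^{\fracpi}_{p,q}}(0,1))$ contains an $L_p$-ball.
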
 
Earlier Burenkov, Gol'dman and Peetre
\cite{Pee75, BuGo79, Gol79} proved surjectiveness for $q=1$ (the
latter two even for anisotropic spaces), but the
first to consider this borderline were seemingly Agmon and
H{\"o}rmander \cite{AgH76} (cf.\ their note), who covered $p=2$. However, the
borderline itself was found in 1951 by Nikolski\u\i~\cite{Nik51}. 
Using atomic decompositions, Frazier and Jawerth \cite{FJ1} proved the
surjectivity for $0<q\le 1$. An alternative argument is 
given below by means of a short application of the Closed Range Theorem (extended
to quasi-Banach spaces); it should be interesting because of the simplicity.

Theorems \ref{gnrc-thm}, \ref{nbord-thm} and \ref{obord-thm} are
proved and re-proved here, for 
they may actually all be obtained by combining general principles with a single,
mixed-norm estimate; in its turn, this estimate follows 
straightforwardly from the Paley--Wiener--Schwartz theorem and the
Nikolski\u\i--Plancherel--Polya inequality; see
Section~\ref{boundedness-sect} below. Besides being a unified proof, it is
also simple compared to those in e.g.~\cite{BL76,T2,FJ1}.

The mixed-norm estimate actually shows $\cal S'$-convergence of
the series used as the working definition of $\g u$ in \eqref{1.1} below.
In Theorem~\ref{obord-thm} this is a
consequence of $L_p$'s completeness, and for the generic cases in
Theorem~\ref{gnrc-thm} it follows from the known convergence criteria for
series with spectral conditions, summed up in (ii) of 
Theorem~\ref{Y-thm} below.

Furthermore, a small reflection about this estimate yields

\begin{thm}
  \label{dist-thm}
Let $s\ge \fracp +(n-1)(\fracp-1)_+$, and suppose $q\le 1$ holds in the case
of equality. Then there is an inclusion
\begin{equation}
  B^s_{p,q}(\Rn)\subset C(\R,\cal D'(\R^{n-1})),
\end{equation}
and the working definition of $\g$ amounts to a restriction of the natural
trace  on $ C(\R,\cal D'(\R^{n-1}))$.
\end{thm}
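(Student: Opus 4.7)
The plan is to bridge the Besov framework and $C(\R,\cal D'(\R^{n-1}))$ through the mixed-norm estimate at the heart of the paper. Decomposing $u = \sum_{k\ge 0} u_k$ with $u_k = \check\Phi_k * u$, the Paley--Wiener--Schwartz theorem makes each $u_k$ entire of exponential type, hence $C^\infty$ on $\Rn$, and one checks directly that $x_n \mapsto u_k(\cdot, x_n)$ lies in $C(\R, L_p(\R^{n-1}))$ (differentiation in $x_n$ preserves the spectral bound, hence $L_p$-membership). The task is then to show that the series $\sum u_k$ converges in a topology uniform in $x_n$ and contained in $\cal D'(\R^{n-1})$.

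The Nikolski\u\i--Plancherel--Polya inequality applied in $x_n$, then raised to the $p$-th power and integrated over $x'$, yields
\begin{equation*}
  \sup_{x_n\in\R} \|u_k(\cdot,x_n)\|_{L_p(\R^{n-1})} \le C\, 2^{k/p}\, \|u_k\|_{L_p(\Rn)},
\end{equation*}
precisely the estimate advertised in the introduction. When $p\ge 1$ this immediately produces $\sum_k \sup_{x_n}\|u_k(\cdot,x_n)\|_{L_p(\R^{n-1})} \le C \sum_k 2^{k/p}\|u_k\|_{L_p(\Rn)}$, and the right-hand side is finite for $s > \fracp$ (any $q$) by inserting the trivial embedding $B^s_{p,q}\hookrightarrow B^{s'}_{p,\infty}$ with $\fracp < s' < s$, and for $s = \fracp$ with $q \le 1$ via $\ell^q\hookrightarrow\ell^1$. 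Thus $\sum u_k$ converges in $L_p(\R^{n-1})$ uniformly in $x_n$, so $u \in C(\R,L_p(\R^{n-1})) \hookrightarrow C(\R,\cal D'(\R^{n-1}))$.

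For $p<1$ the space $L_p(\R^{n-1})$ no longer embeds into $\cal D'(\R^{n-1})$, cf.\ \eqref{i7}, so one passes to $L_1$ via a second Nikolski\u\i\ step in $x'$: since $u_k(\cdot,x_n)$ has Fourier support in $|\xi'|\lesssim 2^k$, one has $\|u_k(\cdot,x_n)\|_{L_1(\R^{n-1})} \le C\, 2^{k(n-1)(\fracpi-1)} \|u_k(\cdot,x_n)\|_{L_p(\R^{n-1})}$, which composed with the mixed-norm bound gives $\sup_{x_n}\|u_k(\cdot,x_n)\|_{L_1(\R^{n-1})} \le C\, 2^{k(\fracnp-n+1)} \|u_k\|_{L_p(\Rn)}$. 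Under $s = \fracnp-n+1$ with $q\le 1$, or $s > \fracnp-n+1$ with arbitrary $q$, the same summation mechanism forces $\sum u_k$ to converge absolutely in $L_1(\R^{n-1})$ uniformly in $x_n$, so $u \in C(\R,L_1(\R^{n-1})) \hookrightarrow C(\R,\cal D'(\R^{n-1}))$.

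The identification of the trace is then routine: the natural trace on $C(\R,\cal D'(\R^{n-1}))$ is the evaluation $f \mapsto f(0)$, which is continuous in the uniform topology on the codomain; the uniform-in-$x_n$ convergence just established lets one commute evaluation at $x_n=0$ with the sum, giving $\sum_k u_k(\cdot,0) = \sum_k(\check\Phi_k*u)|_{x_n=0} = \g u$ in $\cal D'(\R^{n-1})$. The principal obstacle is the case $p<1$: natural $L_p$-convergence is useless in $\cal D'$, and one must install the companion Nikolski\u\i\ shift in $x'$ to land in $L_1$; once inside $L_1$, uniform limits of continuous functions dispense with both conclusions simultaneously.
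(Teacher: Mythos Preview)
Your argument is correct and follows essentially the same route as the paper: the mixed-norm estimate \eqref{2.4} drives uniform-in-$x_n$ convergence of the Littlewood--Paley series in $L_r(\R^{n-1})$ for some $r\ge 1$, whence $u\in C(\R,L_r)\hookrightarrow C(\R,\cal D')$ and evaluation at $x_n=0$ recovers $\g u$. The only cosmetic difference is that the paper first embeds into $B^{1/r}_{r,1}$ with $r=\max(1,p)$ (Remark~\ref{embd-rem}) and then argues once for $r\ge 1$, whereas you treat $p<1$ directly via an extra Nikol'ski\u\i\ step in $x'$---this is exactly what underlies that embedding; the paper is also slightly more explicit (Proposition~\ref{reconstruction-prop}) about why the $C_{\mathrm b}(\R,L_r)$-limit coincides with $u$ in $\cal D'(\Rn)$, a point you leave implicit.
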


For the two cases in Theorem~\ref{nbord-thm} it is also noteworthy that 
they stem from an analogous destinction in
(iii) of Theorem~\ref{Y-thm} below.
However, part (iii) of the latter theorem is actually a generalisation
of the criteria to the borderline $s=\fracnp-n$, and the necessity of
the splitting into two cases is shown in Proposition~\ref{qp-prop}.
Hence this paper also contributes to the convergence
criteria in general Besov spaces.  

\begin{rem}
  \label{FJS-rem}
 In a subsequent joint work \cite{FaJoSi98}, inspired by the present article,
 especially Theorems~\ref{nbord-thm} and \ref{dist-thm}, the traces of all
admissible Besov and Triebel--Lizorkin spaces were determined. In particular 
 the exact ranges in \eqref{i10} and \eqref{i11} were found to be
 the approximation space $A^{(n-1)(\fracpi-1)}_{p,q}$ in both cases.
 So although $r=\max(p,q)$ is the smallest possible
 integral-exponent when the co-domain is
 stipulated to be a Besov space (as in Theorem~\ref{nbord-thm} ff.\
 and throughout this paper),
 the situation is different if the scale of $A^s_{p,q}$ spaces is adopted.
\end{rem}

\subsection*{Acknowledgement} In the early stages I benefitted from
discussions with prof.\ H.~Triebel, who also kindly provided \cite{Pee75}.

\section{Preliminaries} For the general notions in distribution theory standard
notation is used, similarly to \cite{H}; $C(\R,X)$ denotes the vector 
space of continuous functions from $\R$ to $X$, and if $X$ is a Banach space,
$\Cb(\R,X)$ stands for the sup-normed space of continuous bounded functions. 

For the Besov spaces $B^{s}_{p,q}$ the conventions of \cite{Y1} are
adopted, so the norm is defined from a
Littlewood--Paley decomposition
$1=\sum_{j=0}^\infty \Phi_j(\xi)$, where the $\Phi_j(\xi)$ vanish unless
$\frac{11}{20}2^j\le|\xi|\le\frac{13}{10}2^j$ when $j>0$. This may, moreover,
be obtained by letting $\Phi_0=\Psi_0$ and $\Phi_j=\Psi_j-\Psi_{j-1}$ when
$\Psi_j(\xi)=\Psi(2^{-j}|\xi|)$ for some real $C^\infty$ function $\Psi(t)$ on
$\R$ vanishing for $t>13/10$ and equalling $1$ for $t<11/10$; in this case
$\Psi_j=\Phi_0+\dots+\Phi_j$.  

Then $B^{s}_{p,q}$ is defined to consist of the $u\in\cal S'(\Rn)$ for which
\begin{equation}
  \norm{u}{B^{s}_{p,q}}:=(\sum_{k=0}^\infty2^{skq}\norm{\cal
  F^{-1}(\Phi_k\hat u)}{L_p}^q)^{\fracci1q}<\infty.
\end{equation}
On $\R^{n-1}$ a partition of unity $1=\sum \Phi_j'$ with
$\Phi_j'(\xi')=\Phi_j(\xi',0)$ is used. 

Equivalently a partition may be used in which each function is a
product of $n$ factors, each depending on a single coordinate $\xi_j$ of $\xi$.
This is folklore, but for precision the following easy construction
and Lemma~\ref{Liz-lem} below are given. Let $\Phi^{(1)}_k$ and
$\Psi^{(1)}_k$ denote the functions obtained in the manner above for $n=1$. Then
\begin{equation}
  \tilde\Psi_{k}(\xi):= \Psi^{(1)}_k(\xi_1)\dots\Psi^{(1)}_k(\xi_n)
\end{equation}
equals $1$ in $B_{\infty}(0,\tfrac{11}{10}2^k)$, the max-norm ball of radius
$\tfrac{11}{10}2^k$, centred at the origin; 
$\supp\tilde\Psi_{k}$ lies in $B_{\infty}(0,\tfrac{13}{10}2^k)$. Now insertion of
$\Psi^{(1)}_k=\Psi^{(1)}_{k-1}+\Phi^{(1)}_k$ gives, for $k\ge1$,
\begin{gather}
  \tilde\Psi_k(\xi)= \tilde\Psi_{k-1}(\xi)+
  \sum_{\emptyset\neq J\subset\{1,\dots,n\}} \Theta_{J,k}(\xi),
\\
  \text{whereby}\quad \Theta_{J,k}(\xi)=\prod_{j\in J}\Phi^{(1)}_{k}(\xi_j)
                                        \prod_{j\notin J}\Psi^{(1)}_{k-1}(\xi_j).
\end{gather}
Letting $\Theta_{J,0}=\tilde\Psi_{0}$, this yields a smooth partition of
unity since for $\xi\in\Rn$,
\begin{equation}
1=\sum_{k=0}^\infty\sum_{J}\Theta_{J,k}(\xi).  
\end{equation}
When $k\ge1$, then evidently 
\begin{equation}
  \supp \Theta_{J,k}\subset 
  B_{\infty}(0,\tfrac{13}{10}2^k)\setminus B_{\infty}(0,\tfrac{11}{10}2^{k-1}).
\end{equation}  
Observe also the tensor product structure of the function $\Theta_{J,k}$ and that 
$\Theta_{J,k}(\xi)=\Theta_{J,1}(2^{-(k-1)}\xi)$ for $k\ge1$.

Finally, the next lemma may be proved in the usual way by means of (iv) in
Theorem~\ref{Y-thm} below, using also that independently of $k$ there are (1
or) $2^n-1$ terms in the sum over $J$.

\begin{lem}
  \label{Liz-lem}
For every $s\in\R$ and $p$, $q\in\,]0,\infty]$ the Besov space
$B^{s}_{p,q}(\Rn)$ coincides with the set of $u\in\cal S'(\Rn)$ for which the
following quasi-norm is finite:
\begin{equation}
  \norm{u}{B^{s}_{p,q}}^{\Theta}:=(\sum_{k=0}^\infty\sum_{J}2^{skq}\norm{\cal
  F^{-1}(\Theta_{J,k}\hat u)}{L_p}^q)^{\fracci1q}.
  \label{Liz-eq}
\end{equation}
Moreover, $\norm{\cdot}{B^{s}_{p,q}}^{\Theta}$ is an equivalent quasi-norm for $B^{s}_{p,q}$.
\end{lem}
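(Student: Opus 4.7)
The plan is to establish mutual domination of the two quasi-norms by exploiting the dyadic localisation of both partitions together with Theorem~\ref{Y-thm}(iv). The key geometric observation is that, for $k\ge 1$, $\supp\Theta_{J,k}$ lies in a Euclidean annulus $\{c_1 2^k \le |\xi|\le c_2 2^k\}$ with constants $c_1,c_2$ depending only on $n$: since $J$ is non-empty, some $j\in J$ forces $|\xi_j|\ge\frac{11}{20}2^k$ and thus $|\xi|\ge\frac{11}{20}2^k$, while the componentwise bounds on $\Phi^{(1)}_k$ and $\Psi^{(1)}_{k-1}$ give $|\xi|\le \sqrt n\,\frac{13}{10}2^k$. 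Combined with the usual dyadic support of $\Phi_m$, this forces $\Theta_{J,k}\Phi_m\equiv 0$ whenever $|k-m|>N$ for a fixed integer $N=N(n)$; the low-frequency case $k=0$ is trivial since then only a single fixed compactly supported function appears.

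For the estimate $\|u\|^{\Theta}_{B^{s}_{p,q}}\le C\|u\|_{B^{s}_{p,q}}$ I would expand $\hat u=\sum_m\Phi_m\hat u$ so that
\begin{equation}
\cal F^{-1}(\Theta_{J,k}\hat u)=\sum_{|m-k|\le N}\cal F^{-1}(\Theta_{J,k}\Phi_m\hat u),
\end{equation}
and then appeal to Theorem~\ref{Y-thm}(iv) for a Fourier-multiplier bound $\|\cal F^{-1}(\Theta_{J,k}\Phi_m\hat u)\|_{L_p}\le C'\|\cal F^{-1}(\Phi_m\hat u)\|_{L_p}$ with $C'$ independent of $k,m,J$. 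The uniformity in $k$ is secured by the self-similarity $\Theta_{J,k}(\xi)=\Theta_{J,1}(2^{-(k-1)}\xi)$ already singled out in the text, since the multiplier norm appearing in (iv) is invariant under such rescalings. Multiplying through by $2^{ksq}$, applying the quasi-triangle inequality for $\ell^q$, summing over $k$ and over the $2^n-1$ admissible sets $J$ (a count crucially independent of $k$), and performing a harmless index shift yields the desired inequality.

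The reverse direction proceeds symmetrically: one writes $\cal F^{-1}(\Phi_k\hat u)=\sum_{|m-k|\le N,\,J}\cal F^{-1}(\Phi_k\Theta_{J,m}\hat u)$ and applies the same multiplier estimate with the roles of $\Phi$ and $\Theta$ swapped. The main (and rather mild) hurdle will be verifying that the constants drawn from Theorem~\ref{Y-thm}(iv) can genuinely be chosen uniform in $k$ and $J$; this is precisely handled by the scaling $\Theta_{J,k}(\xi)=\Theta_{J,1}(2^{-(k-1)}\xi)$ and the fixed cardinality $2^n-1$ of the $J$-sum, the two ingredients the hint in the excerpt emphasises.
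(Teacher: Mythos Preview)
Your proposal is correct and matches the paper's one-line hint, which simply says to use Theorem~\ref{Y-thm}(iv) together with the fact that the $J$-sum has at most $2^n-1$ terms independently of $k$. One small sharpening: part~(iv) is not itself a Fourier-multiplier bound, so the cleanest way to invoke it is to apply it \emph{directly} to the decomposition $u=\sum_{k}\bigl(\sum_{J}\cal F^{-1}(\Theta_{J,k}\hat u)\bigr)$, whose terms satisfy the annular condition~\eqref{spec-cnd'} by your support observation, yielding $\norm{u}{B^{s}_{p,q}}\le c\,\norm{u}{B^{s}_{p,q}}^{\Theta}$ in one stroke; the reverse inequality then comes from the finite-overlap convolution estimates you describe (Young's inequality for $p\ge1$, the Nikolski\u\i--Plancherel--Polya variant for $p<1$), with uniformity in $k$ guaranteed by the scaling relation $\Theta_{J,k}(\xi)=\Theta_{J,1}(2^{-(k-1)}\xi)$.
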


\section{Definition of the trace}   \label{definition-sect}
\subsection{The working definition}
When dealing with $\gamma_0u$ it is convenient to  take a
Littlewood--Paley partition of unity, say $1=\sum_{j=0}^\infty
\Phi_j$, and let
\begin{equation}
  \gamma_0u=\sum_{j=0}^\infty \cal F^{-1}(\Phi_j\cal F u)\big|_{x_n=0}
  \label{1.1}
\end{equation}
for those $u\in\cal S'(\Rn)$ for which the sum converges in $\cal
D'(\R^{n-1})$: by the Paley--Wiener--Schwartz theorem each summand
$\cal F^{-1}(\Phi_j\cal F u)$ is an entire analytic function for which 
restriction to $x_n=0$ makes sense.

However, the limit in \eqref{1.1} might depend on the $\Phi_j$, 
but in Proposition~\ref{reconstruction-prop} below, this is shown not to be
the case for the spaces treated here.  (The procedure in \eqref{1.1} was
used to define the trace in \cite{J}, but without justification or
relation to other trace notions.)

\bigskip

The usefulness of \eqref{1.1} depends on the availability of
easy-to-apply results for the convergence of a series
$\sum_{j=0}^\infty u_j$. While for a general Banach space $X$ a finite norm
series, $\sum_{j=0}^\infty \norm{u_j}{X}<\infty$, is such a criterion,  
$B^{s}_{p,q}$ has a variant with $\ell^s_q(L_p)$-norms without the
troublesome $\cal F^{-1}\Phi_j\cal F$ acting on $u_j$.

For the reader's sake, these criteria for series with spectral conditions are
recalled with \cite[Thms.~3.6, 3.7]{Y1} in (ii) and (iv) below, 
together with supplements on the borderline cases for
$s=\max(0,\fracnp-n)$ in (i) and (iii).

\begin{thm}   \label{Y-thm}
Let a series $\sum_{j=0}^\infty u_j$ of
distributions $u_j$ in $\cal S'(\Rn)$ be given together with  numbers 
$s\in\R$ and $p$ and $q$ in $\,]0,\infty]$, and consider then
\begin{equation}
  B:=(\sum_{j=0}^{\infty} 2^{sjq}\norm{u_j}{L_p}^q)^\fracci1q
  \label{B-eq}
\end{equation}
as a constant in $[0,\infty]$ (with sup-norm over $j$ if $q=\infty$).

Then the following assertion is valid:
\begin{itemize}
    \item[(i)] If $s=0$, $1\le p\le\infty$ and $q\le1$, then
$B<\infty$ implies that $\sum u_j$ converges in $L_p(\Rn)$ to a sum
$u(x)$ for which $\norm{u}{L_p}\le B$ holds.
\end{itemize}
In addition, suppose that for some $A>0$ the spectral condition
\begin{equation}
  \supp \cal F u_j \subset \{\,\xi\mid |\xi|\le A2^j\,\}
  \label{spec-cnd}
\end{equation}
is satisfied by each $u_j$, $j\ge0$. Then one has:
\begin{itemize}
    \item[(ii)] If $s>\max(0,\fracnp-n)$, then $B<\infty$ implies
convergence of $\sum u_j$ in $\cal S'(\Rn)$ to a limit $u(x)$ in
$B^{s}_{p,q}(\Rn)$ for which $\norm{u}{B^{s}_{p,q}}\le cB$ holds for
some constant $c=c(n,s,p,q)$.
    \item[(iii)] If $s=\fracnp-n$, $p\in\,]0,1[$ and $q\in\,]0,1]$, 
then $B<\infty$ implies convergence of $\sum u_j$ in $L_1(\Rn)$ to a
limit $u(x)$ in $L_1$ for which $\norm{u}{L_1}\le cB$ holds 
for some constant $c=c(n,p,q)$.

Moreover, there is then a constant $c=c(n,p,q)$ such that
$u(x)$ belongs to $B^{\fracci np-n}_{p,\infty}$ or
$B^{\fracci nq-n}_{q,\infty}$ and satisfies the estimate 
\begin{align}
  \norm{u}{B^{\fracci np-n}_{p,\infty}}&\le cB,  
  \quad\text{when $q\le p<1$},
\\
  \norm{u}{B^{\fracci nq-n}_{q,\infty}}&\le cB,  
  \quad\text{when $p<q\le1$},
\end{align}
respectively.
  \item[(iv)] Furthermore, if the stronger condition
\begin{equation}
  \supp \cal F u_j \subset \{\,\xi\mid \frac1A 2^j\le|\xi|\le A2^j\,\}
  \label{spec-cnd'}
\end{equation}
holds for $j>0$, then the assertion \upn{(ii)} holds even for all $s\in\R$.
\end{itemize}
\end{thm}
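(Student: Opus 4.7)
Parts (ii) and (iv) reproduce \cite[Thm.~3.6, 3.7]{Y1}; I would quote them directly. The new content is in (i) and (iii). Two tools suffice: the Nikolski\u\i--Plancherel--Polya inequality (giving $\|u_j\|_{L_r}\le C_A\,2^{jn(1/p-1/r)}\|u_j\|_{L_p}$ for $p\le r$ under \eqref{spec-cnd}) and the elementary embedding $\sum_j a_j\le(\sum_j a_j^q)^{1/q}$ of $\ell^q$ into $\ell^1$ valid for non-negative $a_j$ and $q\le1$.

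\textbf{Part (i).} Here $L_p$ ($1\le p\le\infty$) is a Banach space; taking $a_j=\|u_j\|_{L_p}$ in the $\ell^q$-into-$\ell^1$ inequality gives $\sum\|u_j\|_{L_p}\le B$, so the series converges absolutely to some $u\in L_p$ with $\|u\|_{L_p}\le B$.

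\textbf{Part (iii).} Set $s=n/p-n>0$. Nikolski\u\i\ with $r=1$ yields $\|u_j\|_{L_1}\le C\,2^{sj}\|u_j\|_{L_p}$, and summing via $\ell^q\hookrightarrow\ell^1$ gives $\sum\|u_j\|_{L_1}\le cB$; thus $u:=\sum u_j\in L_1$ with $\|u\|_{L_1}\le cB$. For the Besov bound in the case $q\le p<1$, the spectral condition forces $\Phi_k\hat u_j\equiv 0$ when $j<k-c_0$ for some $c_0=c_0(A)$, whence
\begin{equation*}
\cal F^{-1}(\Phi_k\hat u)=\sum_{j\ge k-c_0}\check\Phi_k*u_j.
\end{equation*}
The $p$-triangle inequality for $L_p$, the standard uniform Fourier-multiplier bound $\|\check\Phi_k*u_j\|_{L_p}\le C\|u_j\|_{L_p}$ for band-limited $u_j$, the positivity of $s$ (so $2^{spk}\lesssim 2^{spj}$ throughout the remaining sum), and the embedding $\ell^q\hookrightarrow\ell^p$ (where the hypothesis $q\le p$ enters) combine to give $2^{spk}\|\cal F^{-1}(\Phi_k\hat u)\|_{L_p}^p\le cB^p$ uniformly in $k\ge1$; the term $k=0$ is handled directly from $\sup_j 2^{sj}\|u_j\|_{L_p}\le B$ together with a geometric-tail sum. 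Taking the supremum in $k$ gives $\|u\|_{B^s_{p,\infty}}\le cB$. For $p<q\le1$, a further application of Nikolski\u\i\ produces $2^{(n/q-n)j}\|u_j\|_{L_q}\le C\,2^{sj}\|u_j\|_{L_p}$, which shows that the rescaled data $(u_j,q,q,n/q-n)$ satisfies the hypotheses of the already-treated subcase (now in its symmetric form $q\le q$); applying that result therefore yields $\|u\|_{B^{n/q-n}_{q,\infty}}\le cB$.

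\textbf{Main obstacle.} The only genuinely delicate point is the uniform Fourier-multiplier bound $\|\check\Phi_k*u_j\|_{L_p}\le C\|u_j\|_{L_p}$ for $0<p<1$, where $L_p$ is merely a quasi-Banach space; this is a standard consequence of Peetre's maximal-function estimate applied to the band-limited $u_j$, but it is the one place where the $p<1$ regime requires real care. Everything else is bookkeeping combining Nikolski\u\i's inequality with the $\ell^q$-embeddings.
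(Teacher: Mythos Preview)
Your overall architecture is exactly that of the paper: quote (ii) and (iv) from \cite{Y1}; get (i) from $\ell_q\hookrightarrow\ell_1$ and completeness of $L_p$; reduce the $L_1$-statement in (iii) to (i) via the Nikolski\u\i--Plancherel--Polya inequality; and reduce the case $p<q\le1$ to $p=q$ by another application of Nikolski\u\i. All of that matches the paper and is correct.

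The one genuine gap is precisely the step you single out as the ``main obstacle''. The claimed \emph{uniform} bound $\|\check\Phi_k*u_j\|_{L_p}\le C\|u_j\|_{L_p}$ for $0<p<1$ and $j\ge k-c_0$ is false: take $u_j=\check\Psi_j$ (spectrum in $B(0,c2^j)$), so that $\check\Phi_k*\check\Psi_j=\check\Phi_k$ for $j\ge k$, and compute $\|\check\Phi_k\|_{L_p}/\|\check\Psi_j\|_{L_p}\sim 2^{(j-k)n(1/p-1)}\to\infty$. Peetre's maximal inequality does not rescue this, because the weight $(1+A2^j|y|)^a$ in the maximal function is tied to the spectrum of $u_j$, not of $\Phi_k$, and integrating $|\check\Phi_k(y)|(1+A2^j|y|)^a\,dy$ produces the same growing factor $2^{(j-k)a}$.

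The fix is the paper's: use the $p\le1$ convolution inequality for band-limited functions (both $\check\Phi_k$ and $u_j$ have spectra in $B(0,c2^j)$ when $j\ge k-c_0$) to get
\[
\|\check\Phi_k*u_j\|_{L_p}\le c\,2^{jn(1/p-1)}\|\check\Phi_k\|_{L_p}\|u_j\|_{L_p}
  \;\sim\; 2^{(j-k)s}\|u_j\|_{L_p},
\]
since $\|\check\Phi_k\|_{L_p}\sim 2^{-ks}$ with $s=n/p-n$. This \emph{non-uniform} factor then combines with the weight $2^{spk}$ to give exactly $2^{spj}$, after which your $\ell_q\hookrightarrow\ell_p$ step finishes the job. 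So your ``positivity of $s$'' substitution $2^{spk}\lesssim 2^{spj}$ is not an additional step on top of a uniform multiplier bound; rather, the multiplier bound itself already carries the factor $2^{(j-k)s}$ that performs this substitution.
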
 
\begin{proof}
The completeness of $L_p$ easily gives (i); cf.\
\cite[Prop.~2.5]{JJ94mlt}. The $L_1$-part of (iii) may be reduced
to (i) by means of the Nikolski\u\i--Plancherel--Polya inequality,
cf.~\cite[Prop.~2.6]{JJ94mlt}
(modulo typos there: $L^p$ should have been $L_1$ and the corresponding
estimate `$\norm{u}{L_1}\le cB$').  

This gives the existence of $u$, and since
$\cal F^{-1}(\Phi_j\hat u)=\sum_{k=j-h}^{\infty} \cal
F^{-1}(\Phi_j\hat u_k)$ for some fixed $h\in\Z$, we may for $q\le p$
use $\ell_q\hookrightarrow \ell_p$ to get that 
\begin{equation}
  \begin{split}
  \norm{\cal F^{-1}(\Phi_j\hat u)}{L_p}&\le (\sum_{k=j-h}^\infty
  \norm{\check\Phi_j*u_k}{L_p}^p)^\fracpi
\\
  &\le c(\sum_{k=j-h}^\infty
  2^{k(\fracci np-n)p}\norm{\check\Phi_j}{L_p}^p\norm{u_k}{L_p}^p)^\fracpi
\\
  &\le c\max(\norm{\check\Phi_0}{L_p},\norm{\check\Phi_1}{L_p})
    2^{j(n-\fracci np)} B.
  \end{split}
\end{equation}
Therefore $u$ is in $B^{s}_{p,\infty}$ for $s=\fracnp-n$ with the required
estimate. For $p<q$ the Nikolski\u\i--Plancherel--Polya inequality      
applied to $B$ reduces the question to the case with $p=q$.
\end{proof}

It was also shown in \cite[Ex.~2.4]{JJ94mlt} that in both (i) and (iii) the
restriction $q\le1$ is optimal; for $q>1$ there exists series
diverging in $\cal D'(\Rn)$ for which the associated $B$ is finite. 

In addition to this, the receiving spaces in (iii)  must have sum-exponents
equal to  infinity  (see \cite[Th.~6]{FaJoSi98}, where this is derived from
trace estimates) and the
integral-exponents cannot be smaller than $p$ and $q$, respectively:

\begin{prop}
  \label{qp-prop}
If for some $t\in\R$ and $r>0$ there exists $c\in\,]0,\infty[$ such that
every $u\in\cal S(\Rn)$ satisfies 
\begin{equation}
  \norm{u}{B^t_{r,\infty}} \le c 
  (\sum_{j=0}^{\infty} 2^{j(\fracci np-n)q}\norm{u_j}{L_p}^q)^\fracci1q,
  \label{F0-cnd}
\end{equation}
whenever $u=\sum u_j$ is a decomposition satisfying
\eqref{spec-cnd}, then $r\ge q$. 

Consequently, for $p<q\le1$ in Theorem~\ref{Y-thm} \upn{(iii)},
the receiving space $B^{\fracci nq-n}_{q,\infty}$ is optimal with
respect to the integral-exponent. 
\end{prop}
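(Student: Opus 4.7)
The plan is to prove the contrapositive by constructing an explicit family of counterexamples. Assuming $r<q$, I will exhibit, for each $N\in\N$, a test configuration $(u^{(N)},\{u_j^{(N)}\})$ satisfying the spectral condition \eqref{spec-cnd} whose LHS/RHS ratio in \eqref{F0-cnd} grows like $N^{1/r-1/q}$, thereby ruling out any uniform constant~$c$.

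Fix $\phi\in\cal S(\Rn)$ with $\hat\phi$ supported in $\{|\xi|\le1\}$, identically $1$ on $\{|\xi|\le 7/10\}$, and normalised so that $\|\phi\|_{L_p}=1$. For each $N$ pick points $y_0,\dots,y_{N-1}\in\Rn$ mutually separated by a constant $C=C(N)$ (to be chosen large) and set
\begin{equation*}
u_j(x):=2^{jn}\phi\bigl(2^j(x-y_j)\bigr)\quad(j=0,\dots,N-1),\qquad u_j\equiv 0\text{ for }j\ge N.
\end{equation*}
A change of variable gives $\|u_j\|_{L_p}=2^{-j(n/p-n)}$, while $\supp\hat u_j\subset\{|\xi|\le2^j\}$ secures \eqref{spec-cnd} with $A=1$. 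Consequently the RHS of \eqref{F0-cnd} evaluates to exactly $N^{1/q}$.

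For the LHS I would isolate the $k=0$ dyadic block. Because $\hat\phi(2^{-j}\xi)\equiv 1$ on $\supp\Phi_0$ for every $j\ge1$ (by the choice of $\hat\phi$), one obtains
\begin{equation*}
\cal F^{-1}(\Phi_0\hat u)(x) = \sum_{j=1}^{N-1}\check\Phi_0(x-y_j) + \cal F^{-1}(\Phi_0\hat\phi)(x-y_0),
\end{equation*}
in which the last summand is bounded in $L_r$ independently of~$N$. Choosing $C$ large enough that the $N-1$ translates of the Schwartz function $\check\Phi_0$ are essentially disjoint, one has $\|\sum_{j=1}^{N-1}\check\Phi_0(\cdot-y_j)\|_{L_r}\gtrsim(N-1)^{1/r}\|\check\Phi_0\|_{L_r}$, whence $\|u\|_{B^t_{r,\infty}}\gtrsim N^{1/r}$ for any $t\in\R$ (larger values of $k$ can only produce even greater contributions when $t+n-n/r>0$). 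The resulting ratio LHS/RHS $\gtrsim N^{1/r-1/q}$ diverges as $N\to\infty$ whenever $r<q$, the desired contradiction.

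The main obstacle I anticipate is the quantitative disjointness step in the quasi-Banach range $r<1$: the $r$-subadditivity $\|A+B\|_{L_r}^r\le\|A\|_{L_r}^r+\|B\|_{L_r}^r$ goes in the wrong direction for the needed lower bound. To recover the $(N-1)^{1/r}$ estimate one partitions $\Rn$ into disjoint balls $B(y_j,C/2)$ and uses the Schwartz decay of $\check\Phi_0$ to show that for $x\in B(y_j,C/2)$ the cross-terms $\sum_{j'\ne j}\check\Phi_0(x-y_{j'})$ are of order $(N-1)(C/2)^{-K}$ for arbitrary $K$, so that choosing $C=C(N)$ such as $N^{1/K}$ with $K$ sufficiently large makes these tails uniformly negligible while preserving the local mass $\int_{|z|\le C/2}|\check\Phi_0(z)|^r\,dz\to\|\check\Phi_0\|_{L_r}^r$.
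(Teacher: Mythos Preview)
Your proof is correct and follows essentially the same route as the paper's. Both arguments build $u$ as a sum of $N$ dilated--translated bumps (the paper uses $\check\Psi_k(\cdot-x_k)=2^{kn}\check\Psi_0(2^k(\cdot-x_k))$, which is precisely your $u_j$ with $\phi=\check\Psi_0$), read off the RHS as $N^{1/q}$, and then lower--bound the LHS via the $k=0$ Littlewood--Paley piece $\check\Phi_0*u=\sum\check\Phi_0(\cdot-y_j)$, using an $N$--dependent separation of the centres to make the translates effectively disjoint.

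Two minor comments. First, the paper handles the separation more cleanly by exploiting that $\check\Phi_0$ is real with $\check\Phi_0(0)>0$: it picks $R=R(N)$ with $|\check\Phi_0(x)|<\check\Phi_0(0)/(2N)$ for $|x|>R$ and places $x_j=3jR\,e_1$, so the cross--terms are pointwise dominated by half the main term on each ball $B(x_j,\delta)$; this avoids any appeal to Schwartz decay rates. Second, your suggested choice $C=N^{1/K}$ only gives cross--terms of order $O(1)$ if the \emph{same} $K$ is used for the decay; either take $C$ as the paper does, or use Schwartz decay with a larger exponent (e.g.\ $2K$) against the fixed $C=N^{1/K}$ to force the tails to vanish. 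Neither point is a genuine gap.
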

\begin{proof}
The latter statement follows from the former, for on the one hand
$B^{\fracci nq-n}_{q,\infty}\hookrightarrow B^{\fracci
nr-n}_{r,\infty}$ for $r\ge q$, and if, on the other hand, 
$B^{\fracci nr-n}_{r,\infty}$ receives with an estimate
for some $r<q$, then \eqref{F0-cnd} holds. In particular it
does so when $u=\sum u_j$ is a decomposition of a
Schwartz function, so the contradicting conclusion $r\ge q$ follows.

When \eqref{F0-cnd} holds, one may for arbitrary fixed points
$x_j\in\Rn$ define 
\begin{equation}
  \omega_N=\sum_{k=1}^N \check\Psi_k(x-x_k).
\end{equation} 
Independently of the choice of the points $x_j$, the right hand side 
of \eqref{F0-cnd} equals $c N^{\fracci 1q} \norm{\check\Psi_0}{L_p}$,
and it is well known that $x_1$, $x_2$, \dots\ may be chosen such that 
\begin{equation}
  \norm{\omega_N}{B^t_{r,\infty}}\ge c(r)\cdot N^{\fracci 1r};
  \label{1r-eq}
\end{equation}
so in view of \eqref{F0-cnd} the inequality $r\ge q$ must hold.

For completeness' sake it is remarked that \eqref{1r-eq} may be seen thus:
clearly the fact that $\Psi_k\equiv 1$ on $\supp \Phi_0$ yields that 
\begin{equation}
  \norm{\omega_N}{B^t_{r,\infty}}\ge \norm{\check\Phi_0*\omega_N}{L_r}
  = \norm{\sum_{k=1}^N \check\Phi_0(\cdot-x_k)}{L_r}.
\end{equation}
Moreover, $\Phi_0(\xi)=\Phi_0(-\xi)\ge0$, so 
$\check\Phi_0$ is real-valued with $\check\Phi_0(0)>0$,
hence some $\delta>0$  fulfills that
$ \check\Phi_0(x)> \check\Phi_0(0)/2>0$ for $|x|<\delta$.

There is also $R>\delta$ such that 
$|\check\Phi_0(x)|< \check\Phi_0(0)/(2N)$ for $|x|>R$, so if
$x_j=3jR(1,0,\dots,0)$, 
\begin{equation}
  \norm{\omega_N}{B^t_{r,\infty}}\ge \tfrac{1}{2}
  (\sum_{k=1}^N \int_{B(x_k,\delta)}|\check\Phi_0(x-x_k)|^r\,dx)^{\fracci1r}
  =c(r,\Phi_0,\delta)N^{\fracci1r}.
\end{equation}
Indeed,
$|\check\Phi_0*\omega_N|\ge   \check\Phi_0(\cdot-x_j)-
\tfrac{N-1}{2N}\check\Phi_0(0)\ge  \check\Phi_0(\cdot-x_j)/2$ 
holds on the ball $B(x_j,\delta)$ because $|x_k-x|>R$ does so for $k\neq j$. 
This shows \eqref{1r-eq}.
\end{proof}

\begin{rem}
  \label{Y-rem}
In (ii) and (iv) of Theorem~\ref{Y-thm}, the series $u=\sum u_j$ converges in 
$B^{s}_{p,q}$ if $q<\infty$ and in $B^{s-\varepsilon}_{p,1}$ for
$\varepsilon>0$ if $q=\infty$. This is a well-known easy consequence of
the completeness and the norm estimate in the theorem. 
\end{rem}
\begin{rem}
  \label{spec-rem}
The spectral conditions in \eqref{spec-cnd} are robust under restriction: when
$x=(x',x'')$ is a splitting of the variables and $x''$ is kept fixed,
then
\begin{equation}
  \supp \cal F_{x'\to\xi'}u_j(\cdot,x'')\subset
  \{\,\xi'\mid |\xi'|\le A2^j\,\}
  \label{spec-cnd''}
\end{equation}
holds by the Paley--Wiener--Schwartz theorem, for $u_j(\cdot,x'')$ is still
an analytic function satisfying the relevant estimates in $\Re z'$ and 
$\Im z'$.

By the same argument, \eqref{spec-cnd'} goes over into \eqref{spec-cnd''} for
$u_j(\cdot,x'')$. 
\end{rem}

\subsection{The distribution trace}   \label{dtr-ssect}
A rather general definition of the trace is obtained as 
$\rt f:=f(0)$ on the subspace
\begin{equation}
  C(\R,\cal D'(\R^{n-1}))\subset \cal D'(\Rn).
  \label{1.2}
\end{equation}
For the spaces considered in this note, the working definition in \eqref{1.1}
actually amounts to a restriction of $\rt$. This 
is proved in Proposition~\ref{reconstruction-prop} below by means of the
injection in \eqref{1.2}, so this folklore is explicated 
(in lack of a reference):

\begin{prop}
  \label{inj-prop}
Let $f(t)$ belong to $C(\R,\cal D'(\R^{n-1}))$, whereby $\cal
D'(\R^{n-1})$ has the w$^*$-topology. Then
\begin{equation}
  \dual{\Lambda_f}{\varphi}:=\int_\R \dual{f(t)}{\varphi(\cdot,t)}\,dt,
\quad\text{for}\quad \varphi\in C^\infty_0(\Rn),
  \label{1.3}
\end{equation}
defines an injection of $C(\R,\cal D'(\R^{n-1}))$ into $\cal D'(\Rn)$.
\end{prop}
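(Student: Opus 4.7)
The plan is to verify that the formula in~\eqref{1.3} defines a distribution (linearity, absolute convergence of the integral, continuity on $C^\infty_0(\R^n)$) and then to establish injectivity by a mollification argument in the normal variable.

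First, fix $\varphi\in C^\infty_0(\R^n)$ with support in $\R^{n-1}\times[-T,T]$ for some $T>0$. The map $t\mapsto\varphi(\cdot,t)$ is smooth from $\R$ into $C^\infty_0(\R^{n-1})$ and vanishes outside $[-T,T]$, so composing with the w$^*$-continuous $f$ produces a continuous scalar function $t\mapsto\ang{f(t),\varphi(\cdot,t)}$ with compact support, whose Lebesgue integral therefore exists. Linearity in $\varphi$ is immediate.

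For continuity I would exploit the Banach--Steinhaus theorem in the Fr\'echet space $C^\infty_0(K')$, where $K'\subset\R^{n-1}$ is any compact containing the projection of $\supp\varphi$. Since $\{f(t):|t|\le T\}$ is the continuous image of a compact interval under the w$^*$-continuous $f$, it is w$^*$-bounded in $\cal D'(\R^{n-1})$, and Banach--Steinhaus then delivers equicontinuity on $C^\infty_0(K')$, i.e.\ there exist $C,N$ such that $|\ang{f(t),\psi}|\le C\sum_{|\alpha|\le N}\sup|\partial^\alpha\psi|$ for all $|t|\le T$ and all $\psi\in C^\infty_0(K')$. Applying this with $\psi=\varphi_j(\cdot,t)$ for a null sequence $\varphi_j\to 0$ in $C^\infty_0(\R^n)$ (all supported in a common compact) yields $|\Lambda_f(\varphi_j)|\le 2TC\sum_{|\alpha|\le N}\|\partial^\alpha\varphi_j\|_\infty\to 0$, so $\Lambda_f\in\cal D'(\R^n)$.

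For injectivity, suppose $\Lambda_f=0$. Fix $t_0\in\R$ and $\psi\in C^\infty_0(\R^{n-1})$, and take a mollifier $\rho_\varepsilon$ on $\R$ with $\int\rho_\varepsilon=1$ and $\supp\rho_\varepsilon\subset[-\varepsilon,\varepsilon]$. Testing $\Lambda_f$ against the tensor product $\psi(x')\rho_\varepsilon(t-t_0)\in C^\infty_0(\R^n)$ gives $\int_\R\rho_\varepsilon(t-t_0)\ang{f(t),\psi}\,dt=0$; by the continuity of $t\mapsto\ang{f(t),\psi}$ established above, letting $\varepsilon\to 0$ forces $\ang{f(t_0),\psi}=0$. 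Since $t_0$ and $\psi$ are arbitrary, $f\equiv 0$. The main obstacle will be the equicontinuity step: one must confirm that the w$^*$-boundedness of $\{f(t)\}_{|t|\le T}$, regarded as functionals on the Fr\'echet space $C^\infty_0(K')$ for each compact $K'\subset\R^{n-1}$, really yields a seminorm bound of the stated form uniformly in $t$. This is a routine application of Banach--Steinhaus once the LF-structure of $\cal D'$ has been reduced to the Fr\'echet case by fixing the support.
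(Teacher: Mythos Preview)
Your argument is correct and follows the paper's own route: both use the Banach--Steinhaus theorem on $C^\infty_0(K')$ (for a fixed compact $K'\subset\R^{n-1}$) to obtain the equicontinuity bound that makes $t\mapsto\ang{f(t),\varphi(\cdot,t)}$ continuous and yields the seminorm estimate on $\Lambda_f$, and both deduce injectivity by testing against tensor products $\psi(x')\chi(x_n)$ --- your mollifier $\rho_\varepsilon(\cdot-t_0)$ is just a specific choice of $\chi$ that makes the conclusion $\ang{f(t_0),\psi}=0$ explicit. One minor remark: the continuity of $t\mapsto\ang{f(t),\varphi(\cdot,t)}$ that you assert in the first paragraph already needs the equicontinuity from Banach--Steinhaus (to handle the term $\ang{f(s),\varphi(\cdot,t)-\varphi(\cdot,s)}$ in the bilinear splitting), so that step should logically precede rather than follow the integrability claim; the paper orders it this way.
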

\begin{proof}
When $\varphi\in C^\infty_0$ is supported by the rectangle
$K:=[-k,k]^n$, bilinearity and the Banach--Steinhaus theorem for
$C^\infty_0([-k,k]^{n-1})$ give continuity of the map $t\mapsto
\dual{f(t)}{\varphi(\cdot,t)}$ and the bound
\begin{equation}
  |\int_{-k}^k \dual{f(t)}{\varphi(\cdot,t)}\,dt|
  \le 2k c_k \norm{\varphi}{C^\infty_K, N_k},
  \label{1.5}
\end{equation}
while $\varphi$ of the form $\psi(x')\chi(t)$ yields the injectivity
of $f\mapsto \Lambda_f$.
\end{proof}

While  it is meaningful, for every subspace $X$ of $\cal D'(\Rn)$, to ask
whether 
\begin{equation}
  X\subset C(\R,\cal D'(\R^{n-1})),
  \label{1.13}
\end{equation}
it is for arbitrary $u\in\cal D'(\Rn)$ meaningless to
ask whether the dependence on $x_n$ is continuous. Despite this peculiarity,
the estimates yielding boundedness of $\g$ in \eqref{1.1} do also give 
inclusions like \eqref{1.13} for the domains of
$\g$; cf.\ Proposition~\ref{reconstruction-prop}.

\begin{rem}
  \label{smth-rem}
On $X=\Cb(\Rn)$, where the inclusion in \eqref{1.13} is
clear, it follows that \eqref{1.1} converges to the continuous
function obtained from the operation in \eqref{bsc-op} as expected. 
Indeed, since $\Psi_k=\Phi_0+\dots+\Phi_k$ gives an approximative identity,
viz.\ $\cal F^{-1}\Psi_k$, for the convolution algebra $X$,
\begin{equation}
  u(0)=\lim_{k\to\infty}\check\Psi_k*u(\cdot,0)=\g u.
  \label{1.15}
\end{equation}
\end{rem}

\begin{rem}
  \label{core-rem}
Considering $\rho_0\colon H^1(\Rp)\to \C$  given by
$\rho_0u=u(0)$, the restriction $\rho_0\big|_{C^\infty_0}$ extends by
continuity to the zero-operator $L_2\to \C$.
This exemplifies that when a restriction of an operator is extended by
continuity between \emph{another} pair of spaces,  the resulting
map may be very different from the original one.

A less obvious example is $\g\big|_{\cal  S}$ extended as $T$
in \eqref{i4}; cf.\ \eqref{i5}--\eqref{i9}.
\end{rem}

\begin{rem}
  \label{approach-rem}
To avoid phenomena as those in Remark~\ref{core-rem}, the
approach of this paper is first of all to define $\rt$ as the
distributional trace on $C(\R,\cal D'(\R^{n-1}))$; for this reason
Proposition~\ref{inj-prop} is included.
Secondly, boundedness of $\gamma_0\colon X\to Y$ is obtained
together with the identity $\gamma_0=\rt\big|_X$ without extension by
continuity. 
\end{rem}

\section{Boundedness}
  \label{boundedness-sect}
To obtain the continuity properties, observe that since $\cal
F^{-1}(\Phi_j \hat u)$ has spectrum in the ball $B(0,R2^j)$ for
$R=\tfrac{13}{10}$, it follows from Remark~\ref{spec-rem} by freezing $x'$ that
$\cal F^{-1}(\Phi_j \hat u)(x',\cdot)$ has spectrum in $[-R2^j,R2^j]$, hence
by the Nikol'ski\u\i--Plancherel--Polya inequality that  
\begin{equation}
  \norm{\cal F^{-1}(\Phi_j \hat u)(x',\cdot)
        }{L_\infty(\R)}\le c(R2^j)^{\fracpi}
  \norm{\cal F^{-1}(\Phi_j \hat u)(x',\cdot)}{L_p(\R)},
  \label{2.6}
\end{equation}
when the latter is applied in the $x_n$-variable only.

Integration with respect to $x'$ then gives the basic $L_p$-$L_\infty$ estimate
\begin{equation}
  \Norm{\sup_{x_n\in\R}|\cal F^{-1}(\Phi_j \hat u)(\cdot,x_n)|}{L_p(\R^{n-1})} 
  \le c2^{\fracci jp}\norm{\cal F^{-1}(\Phi_j \hat u)}{L_p(\Rn)},
  \label{2.4}   
\end{equation}
and taking in particular $x_n=0$, 
\begin{equation}
  \Norm{\cal F^{-1}(\Phi_j \hat u)(\cdot,0)}{L_p(\R^{n-1})} 
  \le c2^{\fracci jp}\norm{\cal F^{-1}(\Phi_j \hat u)}{L_p(\Rn)}.
  \label{2.5}   
\end{equation}
The boundedness in Theorems~\ref{gnrc-thm},
\ref{nbord-thm} and \ref{obord-thm} now follows by Theorem~\ref{Y-thm} and
Remark~\ref{spec-rem}. 

For example, that $u\in B^{\fracpi}_{p,1}(\Rn)$ means that the right hand side
of \eqref{2.5} is in $\ell_1$, so $\sum_{j=0}^\infty
\cal F^{-1}(\Phi_j\cal F u)\big|_{x_n=0}$ converges in $L_p$
(because of its convergent norm series); hence also in $\cal D'(\R^{n-1})$
when $1\le p\le\infty$. So, with the limit denoted $\g u$
according to the working definition of $\g$,
\begin{equation}
  \norm{\g u}{L_p}\le \sum_{j=0}^\infty \norm{\cal F^{-1}(\Phi_j \hat u)(\cdot,0)}{L_p}
  \le c \norm{u}{B^{\fracpi}_{p,1}}.
\end{equation}
For $B^{\fracpi}_{p,q}$ with $0<q<1$ part (i) of Theorem~\ref{Y-thm}
applies. 

When $s=\fracnp-n+1$ for $p<1$, then \eqref{2.5} may be multiplied by
$2^{j(s-\fracpi)}$ and 
the $\ell_q$-norm of both sides calculated. By Remark~\ref{spec-rem}\,---\,this
time applied with the freezing $x_n=0$\,---\,and (iii) of Theorem~\ref{Y-thm}, the
properties in \eqref{i10}--\eqref{i11} are obtained. Observe here that the assumption on $s$ is
equivalent to  
\begin{equation}
  s-\fracp =(n-1)(\fracp-1),
\end{equation}
which is required when (iii) is applied to the co-domain
$B^{s-\fracpi}_{p,q}(\R^{n-1})$. 

In the same way \eqref{2.5} and (ii) of Theorem~\ref{Y-thm} show
the boundedness in Theorem~\ref{gnrc-thm}. 

\bigskip

Following \cite[2.7.2]{T2}, the right inverse $K$ of $\g$ may be taken as 
\begin{equation}
  Kv=\sum_{j=0}^\infty \psi(2^jx_n)\cal F^{-1}(\Phi_j' \hat v)(x')
  \label{K-eq}
\end{equation}
when $\psi\in\cal S(\R)$ has $\supp \cal F \psi\subset [-1,1]$ and
$\psi(0)=1$.

Indeed, letting $v_j=\cal F^{-1}\Phi_j'\cal Fv$,
\begin{gather}
  \supp \cal F(\psi(2^j\cdot)v_j)\subset \{\,\xi\in\Rn\mid
   2^j\le|\xi|\le3\cdot2^j\,\}   \\
  \norm{\psi(2^j\cdot)v_j}{L_p(\Rn)}= 2^{-\fracci jp}\norm{\psi}{L_p(\R)}
   \norm{v_j}{L_p(\R^{n-1})},
\end{gather}
so part (iv) of Theorem~\ref{Y-thm} gives that $Kv$ is well defined with 
\begin{equation}
  \norm{Kv}{B^{s}_{p,q}(\Rn)}\le c \norm{v}{B^{s-\fracpi}_{p,q}(\R^{n-1})}
\end{equation} 
for $s\in\R$. Moreover, for $s>\fracp +(n-1)(\fracp-1)_+$ the already shown
continuity of $\g$ gives 
\begin{equation}
  \g Kv=\sum \g(\psi(2^jx_n)v_j(x'))= \sum \psi(0)v_j = v.
\end{equation}
This reproves the claims on $K$ in Theorem~\ref{gnrc-thm}.

\begin{rem}
  \label{embd-rem}
The spaces $B^{\fracpi}_{p,1}(\Rn)$ with $1\le p\le\infty$ are maximal among
those under consideration, for when
$s>\fracp+(n-1)(\fracp-1)_+$, 
\begin{equation}
  B^{s}_{p,q}\hookrightarrow B^{\fracci1r}_{r,1} \quad\text{for $r=\max(1,p)$} 
\end{equation}
and this also holds when $s=\fracp+(n-1)(\fracp-1)_+$ and $q\le1$.
\end{rem}

\section{Continuity in the $x_n$-variable}
In view of Remark~\ref{embd-rem}, the proof of Theorem~\ref{dist-thm} need
only be conducted for the 
$B^{\fracpi}_{p,1}(\Rn)$ spaces with $1\le p\le\infty$.
Clearly $x_n=0$ does not play a special role, for the mixed norm estimate
in \eqref{2.4} `absorbs' any value equally well: obviously 
\begin{equation}
  \sup_{x_n\in\R}  \norm{\cal F^{-1}(\Phi_j \hat u)(\cdot,x_n)}{L_p(\R^{n-1})} 
  \le c2^{\fracci jp}\norm{\cal F^{-1}(\Phi_j \hat u)}{L_p(\Rn)}
  \label{2.4'}   
\end{equation}
follows in the same way as \eqref{2.5}. This means that the function series
\begin{equation}
  t\mapsto \sum_{j=0}^\infty \cal F^{-1}(\Phi_j\hat u)\big|_{x_n=t}
  \label{fu-eq}
\end{equation}
converges in the Banach space $\Cb(\R,L_p(\R^{n-1}))$, say, with the limit
denoted by $f_u(t)$. So for every 
$u\in B^{\fracpi}_{p,1}(\Rn)$ with $1\le p\le\infty$,
\begin{equation}
  f_u(t)\in \Cb(\R,L_p(\R^{n-1}))\hookrightarrow \cal D'(\Rn)
  \label{Cb-eq}
\end{equation} 
and $f_u(0)=\g u$ by the working definition of $\g$.
By \eqref{1.3}, the injection in \eqref{Cb-eq}
is well defined and continuous; in fact 
\begin{equation}
  \begin{split}
  |\dual{f}{\varphi}| &\le \int\norm{f(t)}{L_p}
         \norm{\varphi(\cdot,t)}{L_{p'}}\,dt
\\
  &\le (\op{diam}\supp \varphi)^{\fracpi}\norm{f}{\Cb(\R,L_p)} 
   \norm{\varphi}{L_{p'}(\Rn)}
  \end{split}
  \label{Cb-est}
\end{equation}
for every test function $\varphi$, when $p+p'=pp'$.
However, since the series of $C^\infty$ functions in \eqref{fu-eq}  converges
to the given $u$ in $\cal S'(\Rn)$, hence in $\cal D'(\Rn)$, it follows from
\eqref{Cb-eq}--\eqref{Cb-est} that $u=f_u$. 

This proves
\begin{prop}
  \label{reconstruction-prop}
Let $u\in B^{\fracpi}_{p,q}(\Rn)$ for some $p\in[1,\infty]$
and $q\le1$. Then the function $f_u(t)$ given by \eqref{fu-eq}--\eqref{Cb-eq}
defines a distribution $\Lambda_{f_u(t)}$, by Proposition~\ref{inj-prop}, that
coincides with $u$; that is, $\Lambda_{f_u(t)}=u$. 
\end{prop}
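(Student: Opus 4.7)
The plan is to realise the series \eqref{fu-eq} simultaneously in two complete topologies\,---\,in $\Cb(\R,L_p(\R^{n-1}))$ (via $\Lambda$, hence in $\cal D'(\Rn)$) and directly in $\cal S'(\Rn)$\,---\,and then identify the two limits. The mixed-norm estimate \eqref{2.4'} already supplies the crucial ingredient, and the restriction $q\le 1$ is what lets it be converted into a summable norm series.

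First I would check convergence of the partial sums $S_N(t):=\sum_{j=0}^{N}\cal F^{-1}(\Phi_j\hat u)(\cdot,t)$ in $\Cb(\R,L_p(\R^{n-1}))$. Each summand is an entire analytic function of $x_n$ with values in $L_p(\R^{n-1})$, hence continuous there. By \eqref{2.4'},
\begin{equation}
  \sup_{t\in\R}\norm{\cal F^{-1}(\Phi_j\hat u)(\cdot,t)}{L_p(\R^{n-1})}
  \le c\,2^{\fracci jp}\norm{\cal F^{-1}(\Phi_j\hat u)}{L_p(\Rn)}.
\end{equation}
Since $q\le 1$, the embedding $\ell_q\hookrightarrow\ell_1$ gives
$\sum_j 2^{\fracci jp}\norm{\cal F^{-1}(\Phi_j\hat u)}{L_p}\le c\norm{u}{B^{\fracpi}_{p,q}}<\infty$, so $(S_N)$ is Cauchy and converges uniformly in $t$ to some $f_u\in\Cb(\R,L_p(\R^{n-1}))$.

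Next I would apply Proposition~\ref{inj-prop} to produce $\Lambda_{f_u}\in\cal D'(\Rn)$, noting via \eqref{Cb-est} that the map $f\mapsto\Lambda_f$ is continuous from $\Cb(\R,L_p(\R^{n-1}))$ to $\cal D'(\Rn)$. In particular $\Lambda_{S_N}\to\Lambda_{f_u}$ in $\cal D'(\Rn)$. A short Fubini argument identifies $\Lambda_{S_N}$ with the distribution defined by the smooth (and $L_p$-valued-continuous-in-$x_n$) function $S_N$ on $\Rn$ itself, so this is just the ordinary action of $S_N$ as a distribution.

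On the other hand, by definition of $u\in B^{\fracpi}_{p,q}$ the Littlewood--Paley series $S_N\to u$ in $\cal S'(\Rn)$, hence in $\cal D'(\Rn)$. The two limits must agree, giving $\Lambda_{f_u}=u$. The only subtle step is the Fubini identification of $\Lambda_{S_N}$ with $S_N$ as a distribution on $\Rn$; it is routine because each $S_N$ is smooth and its slices $S_N(\cdot,t)$ are continuous in $t$ in $L_p(\R^{n-1})$, so $\int\langle S_N(\cdot,t),\varphi(\cdot,t)\rangle\,dt=\int S_N(x)\varphi(x)\,dx$ for every $\varphi\in C^\infty_0(\Rn)$.
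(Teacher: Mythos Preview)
Your proof is correct and follows essentially the same route as the paper: convergence of the partial sums in $\Cb(\R,L_p(\R^{n-1}))$ via the mixed-norm estimate \eqref{2.4'}, continuity of $f\mapsto\Lambda_f$ into $\cal D'(\Rn)$ via \eqref{Cb-est}, and identification with $u$ through the $\cal S'$-convergence of the Littlewood--Paley series. You are merely more explicit than the paper about the Fubini step $\Lambda_{S_N}=S_N$ and about invoking $\ell_q\hookrightarrow\ell_1$ for $q\le1$, whereas the paper reduces first to $q=1$ via Remark~\ref{embd-rem}.
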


Thereby \eqref{1.13} has been verified for the result in
Theorem~\ref{obord-thm}, so the distribution trace $u(0)$ is defined for every
$u\in B^{\fracpi}_{p,1}$; viewing $u$ as an element of $C(\R,\cal D'(\Rn))$
gives $u(0)=f_u(0)=\g u$ as desired. 
In particular $\g u$ in \eqref{1.1} is independent of the
choice of partition of unity.

\section{Surjectiveness}
Since $\g$ in \eqref{2.7} has dense range, it is for $q=1$ surjective precisely
when its adjoint $\g^*$ has a bounded inverse from $\op{ran}(\g^*)$ to
$L_p^*$ (see e.g.\ \cite[Th.~4.15]{R}).

For $1\le p<\infty$ and $q=1$ the adjoint is bounded, when $p+p'=pp'$,
\begin{equation}
  \g^*\colon L_{p'}(\R^{n-1})\to B^{\frac1{p'}-1}_{p',\infty}(\Rn);
  \label{2.8}
\end{equation}
cf.\ \cite{T2} for the dual space; and $\g^*
u=u\otimes\delta_0$ for $u\in L_{p'}$ since for $\varphi\in \cal S$
\begin{equation}
  \dual{\g^* u}{\varphi}=\dual{u}{\varphi(\cdot,0)}
  =\dual{u\otimes\delta_0}{\varphi}.
  \label{2.9}
\end{equation}
It remains to be shown, with primes omitted for simplicity, that
\begin{equation}
  \norm{u}{L_p}\le c\norm{u\otimes\delta_0}{B^{\fracpi-1}_{p,\infty}}
  =: c\cdot B(u)
  \label{2.10} 
\end{equation}
for all $u\in L_p(\R^{n-1})$ whenever $p\in\,]1,\infty]$.

Using Lemma~\ref{Liz-lem} we have a
partition of unity $1=\sum_{k=0}^{\infty}\sum_{J\neq\emptyset} \Theta_{J,k}$,
where each $\Theta_{J,k}$ is a product:
\begin{equation}
  \begin{gathered}
  \Theta_{J,k}(\xi)=\eta_{J,k}(\xi')\theta_{J,k}(\xi_n),  \\
  \eta_{J,k}(\xi')=\eta_J(2^{-k}\xi'),\quad 
  \theta_{J,k}(\xi_n)=\theta_J(2^{-k}\xi_n)\quad\text{for $k>0$}.
  \end{gathered}
  \label{2.11}
\end{equation}
By \eqref{2.8}, the corresponding $B^{\fracpi-1}_{p,\infty}$-norm with
supremum over $(J,k)$ gives
\begin{equation}
  \begin{split}
  \norm{\check\theta_J}{L_p}\norm{\check\eta_{J,k}*u}{L_p}  
  &= 2^{j(\fracpi-1)}\Norm{\cal F^{-1}(\Theta_{J,k}\cal
   F(u\otimes\delta_0))}{L_p}
\\
  &\le B(u)<\infty.
  \label{2.12}
\end{split}
\end{equation}
Since $\eta_J(0)\ne0$ for some $J$, we can take $J$ such that 
\begin{equation}
  \check\eta_{J,k}*u\to a\cdot u\quad\text{in $\cal D'$ for
  $k\to\infty$}
  \label{2.14}
\end{equation}
if $a:=\int\check\eta_J\ne0$. The w$^*$-compactness of the balls in
$L_p$ together with \eqref{2.14}--\eqref{2.12} show that \eqref{2.10}
holds with $c$ equal to $(a\norm{\check\theta_J}{L_p})^{-1}$.

From the Besov spaces' point of view the surjectiveness is proved in a
natural way above; essentially it is known from the technical report
\cite{Pee75}.

\bigskip

For $q\le1$ the dual of $B^{\fracpi}_{p,q}$ is independent of $q$, because
$(B^{\fracpi}_{p,q})^*=B^{-\fracpi}_{p',\infty}$ then; cf.\
\cite[2.11.2]{T2}. Therefore the adjoint remains equal to \eqref{2.8} 
for $q<1$, so it suffices to show that the Closed Range Theorem is valid when
the domain is a quasi-Banach space. 

Observe first, for precision, that $B^{s}_{p,q}$ is an F-space in Rudin's
terminology  \cite{R} when $d(u,v):=\norm{u-v}{B^{s}_{p,q}}^\lambda$ and
$\lambda=\min(1,p,q)$. Hence continuity and boundedness are equivalent for
operators between these quasi-Banach spaces \cite[1.32]{R}. 

Moreover, defining the operator norm in the usual way, $\BBb B(X,Y)$ becomes a
quasi-Banach space; $\|S+T\|\le c(\|S\|+\|T\|)$ holds with the same constant
as it does for $\norm{\cdot}{Y}$. In particular, $X^*$ is always a Banach
space. As usual each $T\in\BBb B(X,Y)$ has an adjoint $T^*\in\BBb B(Y^*,X^*)$.

\begin{prop}
  \label{cr-thm}
Let $X$ be a quasi-Banach space such that $\norm{\cdot}{X}^\lambda$ is
subadditive for some $\lambda\in\,]0,1]$, let $Y$ be a Banach space and
$T\colon X\to Y$ be a bounded linear operator.
When $\overline{T(X)}=Y$, then boundedness of $T^{*-1}$ from $T^*(Y^*)$ to
$Y^*$ implies that $T$ is surjective, i.e.\ $T(X)=Y$.
\end{prop}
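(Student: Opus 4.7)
The plan is to mirror the standard Banach-space proof of the Closed Range Theorem, but to replace absolute summability in $X$ (used in the iterative construction of a preimage) by $\lambda$-summability, which is available thanks to the subadditivity of $\|\cdot\|_X^\lambda$.

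First, since $\overline{T(X)}=Y$, the adjoint $T^*$ is automatically injective on $Y^*$: if $T^*y^*=0$, then $y^*$ vanishes on $T(X)$, hence on $Y$ by density. Combined with the hypothesis that $T^{*-1}\colon T^*(Y^*)\to Y^*$ is bounded, this gives a constant $M>0$ such that
\begin{equation*}
  \|y^*\|_{Y^*}\le M\,\|T^*y^*\|_{X^*}\qquad(y^*\in Y^*).
\end{equation*}

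Next I would show $B_Y(0,1)\subset\overline{T(B_X(0,M))}$, where $B_Z(0,r)$ denotes the closed ball of radius $r$ in $Z$. The set on the right is closed, convex and balanced in the Banach space $Y$; were some $y_0\in B_Y(0,1)$ to lie outside it, Hahn--Banach separation (applied in $Y$, where it is entirely classical) would furnish a $y^*\in Y^*$ with $\|y^*\|_{Y^*}=1$ and $\sup_{x\in B_X(0,M)}|\langle y^*,Tx\rangle|<1$. Since this supremum equals $M\|T^*y^*\|_{X^*}$, that contradicts the displayed estimate.

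The final step is to upgrade $B_Y(0,1)\subset \overline{T(B_X(0,M))}$ to $B_Y(0,1)\subset T(B_X(0,M'))$ for a finite $M'$. Given $y\in B_Y(0,1)$, iteratively choose $x_n\in X$ with $\|x_n\|_X\le M\cdot 2^{1-n}$ and $\|y-T(x_1+\cdots+x_n)\|_Y\le 2^{-n}$, using the closure inclusion applied after rescaling to each residual. Because $\|\cdot\|_X^\lambda$ is subadditive, the partial sums $s_N=\sum_{n=1}^N x_n$ form a Cauchy sequence in the F-space metric $d(u,v)=\|u-v\|_X^\lambda$, since
\begin{equation*}
  \sum_{n\ge 1}\|x_n\|_X^\lambda \le M^\lambda\sum_{n\ge 1}2^{\lambda(1-n)}<\infty.
\end{equation*}
Completeness of $X$ supplies $x=\sum_n x_n$, and continuity of $T$ yields $Tx=y$, so $T$ is surjective.

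The essential point, and the main obstacle to a direct transcription of the Banach proof, is the separation of roles: Hahn--Banach is applied only in the Banach space $Y$ (where dual-space separation is routine), while the $\lambda$-subadditivity of $\|\cdot\|_X^\lambda$ is exactly what makes the geometric-ratio iteration in $X$ converge in the absence of a genuine norm.
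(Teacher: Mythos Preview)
Your argument is correct and follows essentially the same route as the paper's proof: the paper likewise derives the estimate $\|y^*\|_{Y^*}\le c\|T^*y^*\|_{X^*}$, then invokes Rudin's Lemma~4.13, noting exactly your two points\,---\,that the Hahn--Banach separation step takes place only in the Banach space $Y$, and that the iterative preimage construction goes through once the errors $\varepsilon_n$ are chosen in $\ell_\lambda$ so that $\sum\|x_n\|_X^\lambda<\infty$. Your write-up simply spells out what the paper leaves as a reference-plus-modification.
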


\begin{proof}
Since $\ker T^*\subset T(X)^\perp =\{0\}$ the inverse is well defined; by
assumption there is a constant $c<\infty$ such that 
\begin{equation}
  \norm{y^*}{Y^*}\le c\norm{T^*y^*}{X^*} \quad\text{for all}\quad y^*\in Y^*.
  \label{*bd-eq}
\end{equation}
This inequality implies that $T$ is open. Indeed, if $X$ is a
Banach space this is the content of \cite[Lem.~4.13]{R}. When only $Y$ is
assumed to be a Banach space, the reduction from part (b) to (a) there carries
over verbatim (since the Hahn--Banach theorem is only used for $Y$), and in
the proof of (a) the sequence $(\varepsilon_n)$ should be picked in
$\ell_\lambda$ such that 
$\sum_{n=1}^\infty \varepsilon_n^\lambda < 1-\norm{y_1}{Y}^\lambda$.
Then the sequences $(x_n)$ and $(y_n)$ defined there satisfy
\begin{equation}
  \sum_{n=1}^\infty\norm{x_n}{X}^\lambda\le
  \norm{x_1}{X}^\lambda +\sum_{n=1}^\infty \varepsilon_n^\lambda <
  \norm{y_1}{Y}^\lambda+(1-\norm{y_1}{Y}^\lambda)=1;
\end{equation}
hence $x=\sum x_n$ converges in $X$ and has $\norm{x}{X}<1$ as desired. 

Thus \eqref{*bd-eq} implies that $T$ is an open mapping, 
but as such it's necessarily
surjective.  
\end{proof}

Altogether this shows that $L_p(\R^{n-1})$ is the image of
$B^{\fracpi}_{p,q}(\Rn)$ under $\g$ for every $q\le 1$ when $1\le p<\infty$.
 
\begin{rem}
  It is known that every quasi-Banach space $X$ has an equivalent quasi-norm
  such that $\norm{\cdot}{X}^{\lambda}$ is sub-additive for some
  $\lambda\in\,]0,1]$. In view of this, the proposition holds for all
  quasi-Banach spaces.  
\end{rem}

\section{The borderline for $0<p<1$}   \label{nbord-sect}
Since the boundedness in Theorem~\ref{nbord-thm} is proved in
Section~\ref{boundedness-sect} above, it remains to show the claim on the
integral-exponents.

That it is necessary for $p<q\le1$ in Theorem~\ref{nbord-thm} to let
$B^{(n-1)(\fracci1q-1)}_{q,\infty}(\R^{n-1})$ receive $\g u$
follows because the inequality $r\ge q$ is implied by the estimate
\begin{equation}
  \norm{\g u}{B^t_{r,\infty}}\le c\norm{u}{B^{\fracci np-n+1}_{p,q}}.
  \label{g0F0-cnd} 
\end{equation}
To show this implication, it suffices to extend the $\omega_N$ in the proof of
Proposition~\ref{qp-prop} by taking some $\eta\in\cal S(\R)$ satisfying
$\supp \eta\subset \,]1,2[\,$ and $\check\eta(0)=1$ and set
\begin{equation}
  E\omega_N(x)=\sum_{k=1}^N \check\eta(2^k x_n) \check\Psi_k(x'-x'_k).
\end{equation}
Using \eqref{g0F0-cnd} and part (ii) of Theorem~\ref{Y-thm} to estimate the
Besov norm of $E\omega_N$, it is easily seen that
\begin{equation}
  \norm{\omega_N}{B^t_{r,\infty}(\R^{n-1})}=
  \norm{\g E\omega_N}{B^t_{r,\infty}}\le
  \norm{E\omega_N}{B^{\fracci np-n+1}_{p,q}(\Rn)}\le
  cN^{\fracci1q}.
\end{equation}
Because of \eqref{1r-eq} the inequality $r\ge q$ holds.

\section{Final remarks}   \label{frem-sect}
\begin{rem}
  \label{cx1-rem}
In addition to \eqref{i8},
$\psi_k:=2^{k(n-1)}\eta(2^k\cdot)\varphi\to \delta\otimes\varphi$ 
in $B^s_{p,1}$ for
$k\to\infty$ when $\fracp<s<\fracnp-n$ (which entails $p<1-\fracc
1n$), at least if $\hat\eta=1$ in a ball around $\xi'=0$. For
by Remark~\ref{Y-rem}, $\delta_0=\eta+\sum_{k=1}^\infty
(2^{k(n-1)}\eta(2^k\cdot)-2^{(k-1)(n-1)}\eta(2^{k-1}\cdot))$ 
converges in $B^{s}_{p,1}(\R^{n-1})$
while $\cdot\otimes\varphi$ maps
continuously into $B^{s}_{p,1}(\Rn)$ by \cite{F3}. Hence
$\psi_k\to\delta_0\otimes\varphi$ there, and $T\psi_k\to 0$ as shown
in \eqref{i8}; i.e.~\eqref{i5} holds. 
\end{rem}

\begin{rem}
  \label{cx2-rem}
For $s=\fracnp-n+1$ it is useful to consider 
\begin{equation}
  v_k(x)=\tfrac{1}{k}\sum_{l=k+1}^{2k}
2^{l(n-1)}f(2^lx')g(2^lx_n)
\end{equation}
for Schwartz functions $f$ and $g$ with their spectra in balls of
radius $1/2$ such that $\int f=1$ and $g(0)=1$. As shown in
\cite[Lem.~2.8]{JJ96ell}, $\g  v_k\to \delta_0$ in $\cal D'$
while $v_k\to 0$ in $B^{\fracci np-n+1}_{p,q}$ if $q>1$, so that $\g$
is only continuous from $B^{\fracci np-n+1}_{p,q}$ if $q\le1$.

However, $Tv_k= \frac{1}{k}\sum_{l=k+1}^{2k} 2^{l(n-1)}f(2^l\cdot)$
since $v_k\in\cal S$, and its norm  $\norm{T v_k}{L_p}$ is 
$\cal O(k^{(n-1)(1-\fracpi)})$ 
and so tends to $0$ for $k\to\infty$; that is, already at the
borderline $\g$ and $T$ behave differently. 
\end{rem}

\begin{rem}   \label{ell-rem}
For $\Omega$ equal to the unit ball in $\Rn$, $n\ge3$, Franke and
Runst \cite[Sect.~6.5]{FR95} proved that $B^{\fracci
np-n+1}_{p,\infty}(\overline{\Omega})$ contains an
infinite-dimensional solution space for the problem 
\begin{equation}
  \mlap u=0 \quad\text{in $\Omega$},\quad Tu=0\quad\text{on $S^{n-1}$}.
  \label{dir-pb}  
\end{equation}
In fact, for each boundary point $z\in S^{n-1}$ they showed that
$\Phi(x-z)-\tfrac{1}{n-2}z\cdot\grad\Phi(x-z)$, where
$\Phi(x)=c|x|^{2-n}$ is the fundamental solution of $\mlap$, belongs
to this space and solves \eqref{dir-pb}.

Moreover, in \cite{JJ93,JJ96ell} it was proved that the Boutet de~Monvel
calculus of pseudo-differential boundary operators (for elliptic
problems) extends nicely to spaces with $p<1$. However for trace
operators and $P_\Omega+G$ that precisely \emph{have} class $r\in\Z$, it was
proved that $s\ge\fracnp-n+r$ is \emph{necessary} for continuity from
$B^{s}_{p,q}$ to $\cal D'$ when $p<1$. 

Taken together, these facts show that not only the usual Fredholm properties
but also the continuity of solution operators for elliptic problems break down
for $p<1$ unless $s=\fracnp-n+r$ is taken as the borderline for
operators of class $r$. (For the Dirichl\'et realisation of
$\mlap$, the latter fact was also shown by Chang, Krantz and Stein 
\cite{ChaKraSte93}.)
\end{rem}

%
%  REFERENCES
%
\providecommand{\bysame}{\leavevmode\hbox to3em{\hrulefill}\thinspace}

\end{document}